\documentclass{scrartcl}

\usepackage{natbib}
\usepackage{geometry}
\usepackage{fleqn}
\usepackage{graphicx}
\usepackage[colorlinks,citecolor=blue,urlcolor=blue]{hyperref}
\usepackage{xspace,enumerate} 
\usepackage{amsmath,amssymb}
\usepackage{enumitem}
\usepackage{cite}
\usepackage{mathtools}
\usepackage{dsfont,mathrsfs}
\usepackage{amsthm}
\usepackage{bbm}
\usepackage{tabularx}
\usepackage{booktabs}

\newcommand{\cD}{\mathcal{D}}
\newcommand{\cG}{\mathcal{G}}
\newcommand{\cF}{\mathcal{F}}
\newcommand{\cP}{\mathcal{P}}

\newcommand{\NN}{\mathbb{N}}
\newcommand{\RR}{\mathbb{R}}
\newcommand{\ind}{\mathbbm{1}}

\numberwithin{equation}{section}

\theoremstyle{plain}
\newtheorem{theorem}{Theorem}[section]
\numberwithin{theorem}{section}
\newtheorem{corollary}[theorem]{Corollary}

\newtheorem{proposition}[theorem]{Proposition}

\newtheorem{example}[theorem]{Example}
\newtheorem{remark}[theorem]{Remark}

\DeclareMathOperator{\var}{Var}
\DeclareMathOperator{\supp}{supp}
\DeclareMathOperator{\id}{id}

\title{Comparison of path-independent functions of semimartingales}
\author{Benedikt K\"opfer\footnote{A LGFG grant of the state Baden-W\"urttemberg is gratefully acknowledged}, Ludger R\"uschendorf}
\date{}

\begin{document}

\maketitle
\thispagestyle{empty}

\begin{abstract}
The martingale comparison method is extended to derive comparison results for path-independent functions for general semimartingales. Our approach allows to dismiss with the Markovian assumption on one of the processes made in previous literature. Main ingredients of the comparison method are extensions of the Kolmogorov backwards equation to the non-Markovian case. Putting the comparison processes on the same stochastic basis allows by means of It\^o's formula applied to the propagation operator to conclude the comparison of the processes from the comparison of the semimartingale characteristics.
\end{abstract}

\renewcommand{\thefootnote}{}
\footnotetext{\hspace*{-.51cm}%
AMS 2010 subject classification:
Primary: 60E15; secondary: 60G44, 60G51.\\
Key words and phrases: Path-independent ordering; ordering of semimartingales; Kolmogorov backwards equation}

{\renewcommand{\thefootnote}{\arabic{footnote}}

\section{Introduction}
\label{sec:intro}

Mainly motivated by the problem of deriving ordering results for option prices, comparison results have been derived in \citet{KJS98}, \citet{Ho98}, \citet{BJ00} and \citet{He05}. \citet{GM02} developed a general approach to comparison results w.r.t. to convex ordering of terminal values between one-dimensional semimartingales and Markovian semimartingales based on the supermartingale property of a linking process - the martingale comparison method. Essentially the comparison of local (differential) semimartigale characteristics and the 'propagation of convexity' property of the Markov process imply convex ordering under the assumption that the propagation operator (the value process) of the Markov process satisfies a Kolmogorov backwards equation. Some extensions of this martingale comparison method are given in \citet{BR06,BR07a,BR07b,BR08}. In particular in these papers a general version of the Kolmogorov backwards equation for Markov processes is establihed and extensions to multivariate processes, to further orderings and to some classes of path-dependent options are given.

In the present paper this approach is generalized allowing to state comparison results between two general semimartingales. In comparison to the papers mentioned above, we use the same stochastic basis $\left (\Omega,\cF,(\cF_t)_{[0,T]},P\right )$ for both semimartingales under consideration. This has the advantage that the semimartingale characteristics can be chosen more freely. In the papers above a standing assumption is that one of the processes is a Markov process such that the differential characteristics are functions of the space-time process. This is not necessary if the semimartingales are on the same stochastic basis and we are able to compare two semimartingales directly. Additionally we do not restrict the characteristics to be absolutely continuous. So the results can be applied for example to semimartingales with fixed jump times.

In Section \ref{sec:funceq} we specify the setting and notation. The basic tool in our paper for the proof of the comparison theorems is an extension of the Kolmogorov backwards equation for Markov processes in \citet{BR06,BR07a} to the case of special semimartingales. We give a formulation of these extensions as ``functional equations'' allowing in principle also applications different from the case of backward equations for the pricing functional.

In Section \ref{sec:compemm} we derive comparison results under equivalent martingale measures (e.m.m.). For two semimartingales $X$ and $Y$ with corresponding e.m.m. $Q_1$ and $Q_2$, we state for an integrable function $f$ conditions such that 
\begin{align*}
E_{Q_2}[f(Y_T)] \le E_{Q_1}[f(X_T)].
\end{align*}
The main tool therein is the factorized conditional expectation (propagation operator) of $X$
\begin{align*}
G_f(t,x) := E_{Q_1}[f(X_T)|X_t = x].
\end{align*}
For $G_f$ in $C^{1,2}$ we consider the basic linking process $G_f(t,Y_t)$ which allows by It\^{o}'s formula to link the semimartingale characteristics of $X$ and $Y$. In the subsequent Section \ref{sec:compsm} we derive similar comparison results for two special semimartingales under $P$.

In Section \ref{sec:discussion}, we discuss the assumptions of the main comparison theorems and give examples of classes of semimartingales which possess the required regularity properties. In particular, we discuss the assumption that $G_f$ is of class $C^{1,2}$ and that $G_f$ is convex or directionally convex in the second variable. We conclude this paper with some examples.

\section{Functional equations for local martingales}
\label{sec:funceq}

We consider a finite time horizon since we are interested in the comparison of the processes at fixed time points; so we can take this point as final time point. Let $(X_t)_{t \in [0,T]}$ be an $\RR^d$ valued special semimartingale on a stochastic basis $(\Omega, \cF, (\cF_t)_{[0,T]}, P)$. Further, let $X = M + B$ be the canonical decomposition of $X$ into a local martingale $(M_t)_{t \in [0,T]}$ and a process of finite variation $(B_t)_{t \in [0, T]}$. The $d+1$-dimensional space-time Process $\hat{X} := ((t,X_t))_{t \in [0,T]}$ then also is a special semimimartingale; thus we can choose the truncation function for the semimartingale characteristics to be the identity, even though it is not a truncation function in the sense of \citet{JS03}, see \citet{RS11}. The local martingale part of the canonical decomposition is $(0,M)$ and the finite variation part is $(\id,B)$. By \citet[Proposition II.2.9]{JS03} there exists a predictable process $\hat{A} := (\hat{A}_t)_{t \in [0,T]} \in \mathscr{A}_{loc}^+$ such that the semimartingale characteristics of $\hat{X}$ are given as Lebesgue--Stieltjes integrals with respect to $\hat{A}$. The characteristics $(\hat{B}, \hat{C}, \hat{\nu})$ have the ``good'' form:
\begin{align*}
\begin{cases}
\hat{B}^i = \hat{b}^i \cdot \hat{A},\\
\hat{C}^{ij} = \hat{c}^{ij} \cdot \hat{A},\\
\hat{\nu}(\omega, dt, dx) = d\hat{A}_t(\omega) \hat{K}_{\omega, t}(dx),
\end{cases}
\end{align*}
When there is no danger of confusion, we only write differential characteristics without specifying the integrator process. One candidate process $\hat{A}$ is explicitly specified, namely
\begin{align}
\label{eq:Afromniceversion}
\hat{A} = \sum_{i \le d} \var(\hat{B}^i) + \sum_{i,j \le d} \var(\hat{C}^{ij}) + (|x|^2 \wedge 1) \ast \hat{\nu}.
\end{align} 
Altogether we obtain the following version of the canonical decomposition of the space-time process $\hat{X}$:
\begin{align*}
\hat{X}_t = (0,X_0) + (0,M_t) + (t,B_t)= \hat{X}_0 + (0, M_t) + ((\hat{b} \cdot \hat{A})_t).
\end{align*}
The integral in the last term is understood componentwise. Note that the change from $X$ to $\hat{X}$ does not change the semimartingale characteristics of $X$, they are still contained in the last $d$ dimensions. Only the differential characteristics change because we look for a common integrator. In the theory of Markov processes it is a common procedure to consider the space-time process. Here the space-time process helps to connect the time derivative and the space derivatives.

We start with equations which characterise $C^{1,2}$ functions of $\hat{X}$ which are local martingales. Since we use It\^{o}'s formula, we introduce for a function $f \in C^{0,1}(\RR_+ \times \RR^d)$ the following function:
\begin{align}
\begin{split}
\label{eq:Hforjumpintegral}
H_f:~ \RR_+ \times \RR^d \times \RR^d \to&~ \RR,\\
(t,x,y)~~~~ \mapsto&~ f(t,x+y) - f(t,x) - \sum_{i \le d} \frac{\partial}{\partial x^i} f(t,x)y^i.
\end{split}
\end{align}
We write $dA$ for the measure associated to a process of finite variation $A$. In the sequel we use the following notation for function classes:
\begin{align*}
&\cF_i := \{ f: \RR^d \to \RR ; f ~\text{is increasing} \}, \cF_{dcx} := \{ f: \RR^d \to \RR ; f ~\text{is directionally convex} \},\\
&\cF_{cx} := \{ f: \RR^d \to \RR ; f ~\text{is convex} \}, \cF_{icx} := \{ f: \RR^d \to \RR ; f ~\text{is increasing and convex} \},\\ 
&\cF_{idcx} := \{ f: \RR^d \to \RR ; f ~\text{is increasing and directionally convex} \}.
\end{align*} 

In this section we assume first that the semimartingale $X$ under consideration is a local martingale. Then the canonical decomposition of $\hat{X}$ reduces to 
\begin{align*}
\hat{X} = (0,X_0) + (0,X) + (\hat{b} \cdot \hat{A}) = (0,X_0) + (0,X) + (\id,0).
\end{align*}
Note that here particularly $(\hat{b} \cdot \hat{A}) = (\id,0)$.

The following lemma is an extension of Kolmogorv's backward equation for Markov processes in \citet{BR06} to local martingales.

\begin{proposition}
\label{lemma:kolmogorovbackwardemm}
Let $f \in C^{1,2}([0,T] \times \RR^d)$ and let $X$ be a local martingale. Assume that:
\begin{itemize}
\item[(i)] $(f(t,X_t))_{t \ge 0}$ is a local martingale;

\item[(ii)] $\big|H_f \big| \ast \mu^X \in \mathscr{A}_{loc}^+$.
\end{itemize}

Then the following process is $d\hat{A} \times P$ almost surely identical zero
\begin{align}
\begin{split}
\label{eq:kolmogorovbackwardemm}
U_tf(t,X_{t^-}) :=&~ \hat{b}_t \frac{\partial}{\partial t} f(t,X_{t^-}) + \frac{1}{2} \sum_{i,j \le d} \hat{c}^{ij}_t \frac{\partial^2}{\partial x^i x^j} f(t,X_{t^-})\\
&~ + \int_{\RR^d} H_f(t,X_{t^-},x) \hat{K}_t(dx) = 0.
\end{split}
\end{align}
\end{proposition}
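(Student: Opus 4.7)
The plan is to apply It\^o's formula to the function $f$ of the space-time semimartingale $\hat{X}$, and then exploit the uniqueness of the canonical decomposition of the special semimartingale $f(t,X_t)$. Since $X$ is a local martingale, the canonical decomposition of $\hat{X}$ consists of the local martingale part $(0,M)$ and the finite variation part $(\id, 0)$. It\^o's formula applied to $f \in C^{1,2}$ and $\hat{X}$ yields
\begin{align*}
f(t,X_t) = f(0,X_0) + L_t + \int_0^t \frac{\partial f}{\partial s}(s, X_{s^-})\, ds + \frac{1}{2} \sum_{i,j\le d} \int_0^t \frac{\partial^2 f}{\partial x^i\partial x^j}(s, X_{s^-})\, dC^{ij}_s + H_f \ast \mu^X_t,
\end{align*}
where $L$ collects the continuous stochastic integrals against $M$ and $C$ is the quadratic variation of the continuous local martingale part.

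The next step is to rewrite the jump contribution via the compensator. Assumption (ii) guarantees that $H_f \ast \mu^X$ is of locally integrable variation, so $H_f \ast \mu^X = H_f \ast \nu^X + H_f \ast (\mu^X - \nu^X)$, with the second term being a local martingale. Plugging in the ``good'' form of the characteristics of $\hat{X}$ relative to the common integrator $\hat{A}$, namely $ds = \hat{b}_s\, d\hat{A}_s$ for the time component, $dC^{ij}_s = \hat{c}^{ij}_s\, d\hat{A}_s$, and $\nu^X(ds,dx) = d\hat{A}_s \hat{K}_s(dx)$, the finite variation part of $f(t,X_t)$ becomes
\begin{align*}
\int_0^t \bigl( U_s f(s, X_{s^-}) \bigr)\, d\hat{A}_s,
\end{align*}
while the remaining terms form a local martingale $\widetilde{L}_t$.

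Now I combine this with assumption (i). The decomposition $f(t,X_t) - f(0,X_0) = \widetilde{L}_t + \int_0^\cdot U_s f(s,X_{s^-})\, d\hat{A}_s$ expresses the special semimartingale $f(\cdot,X_\cdot)$ as the sum of a local martingale and a predictable process of finite variation. Since by (i) the left-hand side is itself a local martingale starting at $f(0,X_0)$, the uniqueness of the canonical decomposition of special semimartingales (cf.\ \citet{JS03}) forces the predictable finite variation part to vanish identically, i.e.
\begin{align*}
\int_0^t U_s f(s, X_{s^-})\, d\hat{A}_s = 0 \qquad \text{for all } t \in [0,T],~ P\text{-a.s.}
\end{align*}

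Since $U_\cdot f(\cdot, X_{\cdot^-})$ is predictable and $\hat{A}$ is a predictable process in $\mathscr{A}_{loc}^+$, the vanishing of its Stieltjes integral against $\hat{A}$ on every $[0,t]$ yields that the integrand is zero $d\hat{A} \times P$-a.s., which is the claim. The only nontrivial point is the careful bookkeeping for the space-time process: verifying that the $C^{1,2}$-regularity on $[0,T]\times\RR^d$ suffices to apply It\^o's formula to $\hat{X}$ (the time coordinate has no martingale or jump part, so no further regularity in $t$ is needed), and that with the choice (\ref{eq:Afromniceversion}) for $\hat{A}$ all three drift contributions can simultaneously be written as densities against $d\hat{A}$.
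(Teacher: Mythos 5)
Your proof is correct and follows essentially the same route as the paper: Itô's formula for the space-time process, compensation of the jump measure via assumption (ii), identification of the predictable finite-variation part as $\int_0^\cdot U_sf(s,X_{s^-})\,d\hat{A}_s$, and the conclusion that it vanishes. The only cosmetic difference is in the last step, where you invoke uniqueness of the canonical decomposition of the special semimartingale $f(\cdot,X_\cdot)$, while the paper directly cites the fact that a predictable local martingale of finite variation starting at zero is null (\citet[Corollary I.3.16]{JS03}) --- these are the same underlying fact.
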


\begin{proof}
By It\^{o}'s formula the local martingale $(f(t,X_t))_{t \ge 0}$ has the following representation
\begin{align*}
f(t, X_t) = &~ f(0,X_0) + \int_0^t \frac{\partial}{\partial s} f(s,X_{s^-}) \hat{b}_s d\hat{A}_s + \sum_{i \le d} \int_0^t \frac{\partial}{\partial x^i} f(s, X_{s^-}) dX_s^i\\
&~ + \frac{1}{2} \sum_{i,j \le d} \int_0^t \frac{\partial^2}{\partial x^i x^j} f(s,X_{s^-}) \hat{c}^{ij}_s d\hat{A}_s\\ 
&~ + \int_{[0,t] \times \RR^d} \left[ f(s, X_{s^-} + x) - f(s, X_{s^-}) - \sum_{i \le d} \frac{\partial}{\partial x^i} f(s, X_{s^-}) x^i \right] \mu^X (ds,dx).
\end{align*}
We compensate the jump integral, which is possible by Assumption~$(ii)$ and \citet[Proposition II.1.28]{JS03}. Denoting
{\small
\begin{align*}
& M_t := \sum_{i \le d} \int_0^t \frac{\partial}{\partial x^i} f(s, X_{s^-}) dX_s^i,\\
& N_t :=\int_{[0,t] \times \RR^d} \left [ f(s, X_{s^-} + x) - f(s, X_{s^-}) - \sum_{i \le d} \frac{\partial}{\partial x^i} f(s, X_{s^-}) x^i \right ] [\mu^X (ds,dx) - \hat{K}_s(dx) d\hat{A}_s],
\end{align*}}
the processes $(M_t)_{t \in [0,T]}$ and $(N_t)_{t \in [0,t]}$ are local martingales. As consequence we obtain
\begin{align*}
f(t, X_t) = &~ f(0,X_0) + \int_0^t \hat{b}_s \frac{\partial}{\partial s} f(s,X_{s^-}) d\hat{A}_s + M_t + N_t\\
&~ + \frac{1}{2} \sum_{i,j \le d} \int_0^t \hat{c}^{ij}_s \frac{\partial^2}{\partial x^i x^j} f(s,X_{s^-}) d\hat{A}_s + \int_{[0,t] \times \RR^d} H_f(s,X_{t^-},x) \hat{K}_s(dx)d\hat{A}_s.
\end{align*}
It follows that the process
\begin{align*}
&~\int_0^t \left[ \hat{b}_s \frac{\partial}{\partial s} f(s,X_{s^-}) + \frac{1}{2} \sum_{i,j \le d} \hat{c}^{ij}_s \frac{\partial^2}{\partial x^i x^j} f(s,X_{s^-}) + \int_{\RR^d} H_f(s,X_{t^-},x) \hat{K}_s(dx) \right] d\hat{A}_s
\end{align*}
is a predictable local martingale of finite variation starting in zero and is, therefore, almost surely zero by \citet[Corollary I.3.16]{JS03}. Thus, the integrand has to be $d\hat{A} \times P$ almost surely zero as well. 
\end{proof}

We remark that $f(t, \cdot) \in \mathscr{F}_{cx}$ implies condition (ii) (see \citet{BR06}).

Next we obtain a similar equation in the case that $X$ is a special semimimartingale. Note that the process $\hat{X}$ then is a special semimartingale as well and we have as truncation function the identity. The canonical decomposition of $\hat{X}$ has the form:
\begin{align*}
\hat{X}_t = \hat{X}_0 + \left(0,X_t^c + x \ast (\mu^X - \nu)_t \right) + (\hat{b}^X \cdot \hat{A})_t.
\end{align*}
The reason why we demand $X$ to be special is that we then are able to compensate all of the jumps appearing in It\^{o}'s formula directly. For a general semimartingale the canonical decomposition with a truncation function $h$ is 
\begin{align*}
\hat{X}_t = \hat{X}_0 + \left(0,X_t^c + h \ast (\mu^X - \nu)_t \right) + \left (0,(x-h(x))\ast \mu^X\right )_t + (\hat{b}^X \cdot \hat{A})_t.
\end{align*}
Hence, an integral with respect to $\mu^X$ is added in It\^o's formula. This makes an additional assumption necessary. However, this turns the $H_f$ term into a term with a truncation function which leads to analog proofs; we omit details here.

The following proposition is a version of the Kolmogorov backward equation for special semimartingales.

\begin{proposition}
\label{lemma:kolmogorovbackwardp}
Let $f \in C^{1,2}([0,T] \times \RR^d)$ and let $X$ be a special semimartingale. Assume that:
\begin{itemize}
\item[(i)] $(f(t,X_t))_{t \ge 0}$ is a local martingale;

\item[(ii)] $\big|H_f \big| \ast \mu^X \in \mathscr{A}_{loc}^+$.
\end{itemize}
Then the following process is $d\hat{A} \times P$ almost surely zero
\begin{align}
\begin{split}
\label{eq:kolmogorovbackwardp}
\bar{U}_tf(t,X_{t^-}) :=&~ \hat{b}_t \frac{\partial}{\partial t} f(t,X_{t^-}) + \sum_{i \le d} \hat{b}^i_t \frac{\partial}{\partial x^i} f(t, X_{t^-}) + \frac{1}{2} \sum_{i,j \le d} \hat{c}^{ij}_t \frac{\partial^2}{\partial x^i x^j} f(t,X_{t^-})\\
&~ + \int_{\RR^d} H_f(t,X_{t^-},x) \hat{K}_t(dx) = 0.
\end{split}
\end{align}
\end{proposition}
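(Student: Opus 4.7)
The plan is to mimic the proof of Proposition~\ref{lemma:kolmogorovbackwardemm} verbatim, keeping track of the extra drift contribution coming from the finite variation part $B$ of the canonical decomposition $X = M + B$. First I apply It\^o's formula to $f(t,X_t)$ just as in the local martingale case. The only formal change is that $dX^i_s$ is no longer a local martingale differential; writing $dX^i_s = dM^i_s + \hat{b}^i_s\, d\hat{A}_s$ splits the first-order term into a local martingale piece and an extra finite variation piece $\sum_{i} \int_0^t \tfrac{\partial}{\partial x^i} f(s, X_{s^-}) \hat{b}^i_s\, d\hat{A}_s$, which is exactly the new summand appearing in $\bar U_t f$.

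Next I compensate the jump measure. Because $X$ is assumed to be special, the truncation function can be taken to be the identity, so the integrand of $\mu^X$ already has the form $H_f(s, X_{s^-}, x)$ without a truncation correction; assumption (ii) together with \citet[Proposition II.1.28]{JS03} guarantees that $|H_f| \ast \mu^X$ is locally integrable, so the compensated jump integral
\[
N_t := \int_{[0,t]\times\RR^d} H_f(s, X_{s^-}, x)\, [\mu^X(ds, dx) - \hat K_s(dx)\, d\hat A_s]
\]
is a local martingale. Substituting this and the local martingale $M_t := \sum_i \int_0^t \tfrac{\partial}{\partial x^i} f(s,X_{s^-})\, dM^i_s$ into the It\^o expansion, everything that is not a local martingale aggregates into a single predictable finite variation process, namely $\int_0^t \bar U_s f(s, X_{s^-})\, d\hat A_s$.

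By assumption (i), $f(t, X_t)$ is itself a local martingale; therefore $\int_0^\cdot \bar U_s f(s, X_{s^-})\, d\hat A_s$ equals $f(t, X_t) - f(0, X_0) - M_t - N_t$ minus local martingales, hence is a predictable local martingale of finite variation starting from zero. \citet[Corollary I.3.16]{JS03} forces it to be indistinguishable from zero. Since $d\hat A$ is the reference measure and $\bar U_s f(s, X_{s^-})$ is its predictable density, the integrand must vanish $d\hat A \times P$-almost surely, which is the claim.

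The only real subtlety, compared with Proposition~\ref{lemma:kolmogorovbackwardemm}, is checking that assumption~(ii) is still sufficient to compensate the jump integral when the truncation function is the identity; this is where speciality of $X$ matters, because otherwise an uncompensated $(x - h(x))\ast \mu^X$ term would remain, as the paragraph preceding the proposition explains. Everything else is bookkeeping of one extra first-order drift term, so I do not expect any further obstacle.
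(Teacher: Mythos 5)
Your proposal is correct and follows essentially the same route as the paper's own proof: apply It\^o's formula, split $dX$ via the canonical decomposition into the local martingale part and the drift $\hat{b}^i\, d\hat{A}$, compensate the jump integral using assumption (ii), and conclude via \citet[Corollary I.3.16]{JS03} that the remaining predictable finite variation local martingale vanishes, so its $d\hat{A}$-density is zero $d\hat{A}\times P$-almost surely. Your remark on why speciality of $X$ permits the identity truncation function matches the discussion the paper gives just before the proposition.
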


\begin{proof}
The proof is similar to the proof of Lemma \ref{lemma:kolmogorovbackwardemm}. It\^{o}'s formula yields
\begin{align*}
f(t, X_t) = &~ f(0,X_0) + \int_0^t \frac{\partial}{\partial s} f(s,X_{s^-}) \hat{b}_s d\hat{A}_s + \sum_{i \le d} \int_0^t \frac{\partial}{\partial x^i} f(s, X_{s^-}) dX_s^i\\
&~ + \frac{1}{2} \sum_{i,j \le d} \int_0^t \frac{\partial^2}{\partial x^i x^j} f(s,X_{s^-}) \hat{c}^{ij}_s d\hat{A}_s\\ 
&~ + \int_{[0,t] \times \RR^d} \left[ f(s, X_{s^-} + x) - f(s, X_{s^-}) - \sum_{i \le d} \frac{\partial}{\partial x^i} f(s, X_{s^-}) x^i \right] \mu^X (ds,dx).
\end{align*}
We compensate the jumps (by Assumption~$(ii)$) and use the canonical decomposition of $X$ to split the $dX$ term into the local martingale part and the part of finite variation. We obtain with $M_t = X_t^c + x \ast (\mu^X - \hat{\nu})_t$
\begin{align*}
f(t, X_t) = &~ f(0,X_0) + \int_0^t \frac{\partial}{\partial s} f(s,X_{s^-}) \hat{b}_s d\hat{A}_s + \sum_{i \le d} \int_0^t \frac{\partial}{\partial x^i} f(s, X_{s^-}) \hat{b}^i_s d\hat{A}_s\\
&~ + \sum_{i \le d} \int_0^t \frac{\partial}{\partial x^i} f(s, X_{s^-}) dM_s^i + \frac{1}{2} \sum_{i,j \le d} \int_0^t \frac{\partial^2}{\partial x^i x^j} f(s,X_{s^-}) \hat{c}^{ij}_s d\hat{A}_s\\ 
&~ + \int_{[0,t] \times \RR^d} H_f(s,X_{s^-},x) \left [\mu^X (ds,dx) - \hat{K}_s(dx) d\hat{A}_s\right ]\\
&~ + \int_{[0,t] \times \RR^d} H_f(s,X_{s^-},x) \hat{K}_s(dx) d\hat{A}_s
\end{align*}
We conclude that
\begin{align*}
&~\int_0^t \left[ \frac{\partial}{\partial s} f(s,X_{s^-}) \hat{b}_s + \sum_{i \le d} \frac{\partial}{\partial x^i} f(s, X_{s^-}) \hat{b}^i_s + \frac{1}{2} \sum_{i,j \le d} \frac{\partial^2}{\partial x^i x^j} f(s,X_{s^-}) \hat{c}^{ij}_s \right.\\
&~ \left. + \int_{\RR^d} H_f(s,X_{s^-},x) \hat{K}_s(dx) \vphantom{\sum_{i,j \le d} \frac{\partial^2}{\partial x^i x^j}} \right] d\hat{A}_s
\end{align*}
is a predictable local martingale of finite variation starting in zero and is therefore almost surely zero. Hence, the integrand has to be $d\hat{A} \times P$ almost surely zero as well.
\end{proof}

\section{Comparison under equivalent martingale measures}
\label{sec:compemm}

In this section we establish comparison results for semimartingales by the martingale comparison method. Let $X = (X_t)_{t \in [0,T]}$ and $Y = (Y_t)_{t \in [0,T]}$ be semimartingales. Assume that there exist equivalent martingale measures $Q_1$ and $Q_2$ on $(\Omega,\cF,(\cF_t)_{t \in [0,T]})$ for $X$ and $Y$ each, i.e. $X$ is local martingale under $Q_1$ and $Y$ is a local martingale under $Q_2$. In the sequel we denote semimartingale characteristics with a superscript such that it is clear to which process they belong. Further, denote by $\hat{X} := (\id,X)$ and $\hat{Y} := (\id,Y)$ the corresponding space-time processes. For an equivalent local martingale measure $Q_1$ for $X$ and a measurable function $f: (\RR^d,\mathscr{B}(\RR^d)) \to (\RR,\mathscr{B}(\RR))$ such that $f(X_T) \in L^1(Q_1)$ we introduce the pricing functional $G$ (propagation operator)
\begin{align}
\label{eq:backwardfunctional}
G_f(t, x) := E_{Q_1}[f(X_T)|X_t = x].
\end{align}
Note that in the sequel the semimartingale characteristics of $\hat{X}$ are w.r.t. $Q_1$, whereas the characteristics of $\hat{Y}$ are w.r.t. $Q_2$. The semimartingale characteristics under the particular e.m.m. can be obtained by the Girsanov theorem, see \citet[Theorem III.3.24]{JS03}.

The following directionally convex comparison theorem is an extension of \citet[Theorem 2.3]{BR06} to non-Markovian semimartingales.

\begin{theorem}[Directionally convex comparison under e.m.m.] 
\label{thm:orderingdcxemm}
Let $X, Y$ be semimartingales and let $X_0 = Y_0 = x_0 \in \RR^d$ almost surely. We consider a function $f$ such that $f(X_T) \in L^1 (Q_1)$ and $f(Y_T) \in L^1 (Q_2)$. Assume that
\begin{itemize}
\item[(i)] $G_f \in C^{1,2}([0,T] \times \RR^d)$ and $G_f(t, \cdot) \in \cF_{dcx}$ for all $t \in [0,T]$;

\item[(ii)] $U^X_t G_f(t,Y_{t^-}) = 0$ holds $dA^{\hat{Y}} \times Q_2$ almost surely for all $t \in [0,T]$, where $U^X_t$ is the operator defined in \eqref{eq:kolmogorovbackwardemm} with the differential semimartingale characteristics of $\hat{X}$ under $Q_1$ in it;

\item[(iii)] $\big|H_{G_f} \big| \ast \mu^Y \in \mathscr{A}_{loc}^+$, where $H_{G_f}$ is defined in \eqref{eq:Hforjumpintegral};

\item[(iv)] $(G_f(t, Y_t)^-)_{t \in [0,T]}$ is of class (DL);

\item[(v)] $A^{\hat{Y}} = A^{\hat{X}}$;

\item[(vi)] The differential characteristics are $dA^{\hat{Y}} \times Q_2$ almost surely ordered, for all $i,j \le d$:
\begin{align*}
c_t^{\hat{Y} ij} \le &~ c_t^{\hat{X} ij},\\
\int_{\RR^d} g(t,Y_{t^-},x) K^{\hat{Y}}_t(dx) \le&~ \int_{\RR^d} g(t,Y_{t^-},x) K^{\hat{X}}_t(dx), 
\end{align*}
where the second inequality holds for all $g(t,y,\cdot) \in \cF_{dcx}$ such that the integrals exist.
\end{itemize} 

Then it holds that
\begin{align*}
E_{Q_2}[f(Y_T)] \le E_{Q_1}[f(X_T)].
\end{align*}
If in $(vi)$ the inequalities are reversed and $(G_f(t, Y_t)^+)_{t \in [0,T]}$ is of class (DL), then we have that 
\begin{align*}
E_{Q_2}[f(Y_T)] \ge E_{Q_1}[f(X_T)].
\end{align*}
\end{theorem}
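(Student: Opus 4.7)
The plan is to show that the linking process $(G_f(t,Y_t))_{t\in[0,T]}$ is a $Q_2$-super\-martingale and then read off the claimed inequality from its endpoints, using $G_f(T,y)=f(y)$ together with $G_f(0,Y_0)=G_f(0,x_0)=E_{Q_1}[f(X_T)]$ (the latter because $Y_0=X_0=x_0$ almost surely).

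First, I would apply It\^o's formula to $G_f(t,Y_t)$ under $Q_2$. Since $Y$ is a $Q_2$-local martingale, the drift of $\hat Y$ has the form $\hat b^{\hat Y}\cdot A^{\hat Y}=(\id,0)$, so the same steps as in the proof of Proposition~\ref{lemma:kolmogorovbackwardemm} (It\^o's formula, compensation of the jump integral justified by (iii), and using that stochastic integrals against the $Q_2$-local martingale $Y$ are themselves local martingales) yield a decomposition
\begin{align*}
G_f(t,Y_t)=G_f(0,x_0)+L_t+\int_0^t U^Y_s G_f(s,Y_{s^-})\,dA^{\hat Y}_s,
\end{align*}
where $L$ is a $Q_2$-local martingale and $U^Y_s$ is the operator from \eqref{eq:kolmogorovbackwardemm} built from the differential characteristics of $\hat Y$ under $Q_2$.

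Next I would invoke (ii), which says $U^X_s G_f(s,Y_{s^-})=0$ $dA^{\hat Y}\times Q_2$-a.s., and subtract this vanishing quantity from the integrand. By (v) the two operators are integrated against the same $dA$, and the time-derivative contributions cancel because the first coordinate of $\hat B$ is $\id$ for both processes, forcing $\hat b^{\hat Y,0}=\hat b^{\hat X,0}$ $dA^{\hat Y}$-a.e. The drift then collapses to
\begin{align*}
\int_0^t\Big\{\tfrac12\sum_{i,j\le d}(c^{\hat Y,ij}_s-c^{\hat X,ij}_s)\frac{\partial^2 G_f}{\partial x^i\partial x^j}(s,Y_{s^-})+\int_{\RR^d}H_{G_f}(s,Y_{s^-},x)(K^{\hat Y}_s-K^{\hat X}_s)(dx)\Big\}dA^{\hat Y}_s.
\end{align*}
Directional convexity of $G_f(t,\cdot)$ from (i) forces every mixed second partial to be nonnegative, and it also makes $y\mapsto H_{G_f}(s,Y_{s^-},y)$ directionally convex (the shift $G_f(s,Y_{s^-}+y)$ preserves dcx and the subtracted term is linear in $y$). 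Combined with (vi) applied to $g=H_{G_f}$, both integrands are pointwise $\le 0$, so the drift is nonincreasing and $(G_f(t,Y_t))$ is a $Q_2$-local supermartingale.

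Finally, assumption (iv) together with the standard fact that a local supermartingale whose negative part lies in class (DL) is a genuine supermartingale upgrades $(G_f(t,Y_t))$ to a $Q_2$-supermartingale. Taking expectations at $t=T$ and $t=0$ gives $E_{Q_2}[f(Y_T)]=E_{Q_2}[G_f(T,Y_T)]\le G_f(0,x_0)=E_{Q_1}[f(X_T)]$. The reversed inequality follows by the same argument with the orderings in (vi) flipped and (iv) replaced by the class-(DL) property of $(G_f(t,Y_t))^+$. The most delicate step I anticipate is the careful bookkeeping that makes the time-drift components cancel (hinging on (v) and on $\hat b^{\cdot,0}dA=ds$), together with the verification that $H_{G_f}$ really inherits directional convexity in its third argument, so that (vi) can legitimately be invoked on the jump kernels.
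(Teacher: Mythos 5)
Your proposal is correct and follows essentially the same route as the paper: It\^o's formula applied to the linking process $G_f(t,Y_t)$, compensation of the jump integral via (iii), cancellation of the time-derivative term through (ii) and (v) (using $b^{\hat Y}\,dA^{\hat Y}=dt=b^{\hat X}\,dA^{\hat Y}$), nonnegativity of the mixed second partials of $G_f$ and the inherited directional convexity of $H_{G_f}$ combined with (vi) to make the drift nonincreasing, and finally (iv) to upgrade the local supermartingale to a genuine one. No substantive differences from the paper's argument.
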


\begin{proof}
We establish that the linking process $(G_f(t, Y_t))_{t \in [0,T]}$ is a $Q_2$-supermartingale. This is the key idea of the martingale comparison method. Then the assertion follows from the inequality
\begin{align*}
E_{Q_2}[f(Y_T)] = E_{Q_2}[G_f(T,Y_T)] \le G_f(0,x_0) = E_{Q_1}[f(X_T)].
\end{align*}
Since $G_f \in C^{1,2}([0,T] \times \RR^d)$, It\^{o}'s formula yields that $(G_f(t, Y_t))_{t \in [0,T]}$ is a semimartingale starting in $G_f(0,x_0)$ with decomposition
{\small
\begin{align*}
G_f(t, Y_t) =&~ G_f(0,x_0) + \int_0^t \frac{\partial}{\partial s} G_f(s,Y_{s^-}) b^{\hat{Y}}_s dA^{\hat{Y}}_s + \sum_{i \le d} \int_0^t \frac{\partial}{\partial x^i} G_f(s, Y_{s^-}) dY_s^i\\
&~ + \frac{1}{2} \sum_{i,j \le d} \int_0^t \frac{\partial^2}{\partial x^i x^j} G_f(s,Y_{s^-}) c^{\hat{Y}ij}_s dA^{\hat{Y}}_s\\ 
&~ + \int_{[0,t] \times \RR^d} \left[ G_f(s, Y_{s^-} + x) - G_f(s, Y_{s^-}) - \sum_{i \le d} \frac{\partial}{\partial x^i} G_f(s, Y_{s^-}) x^i \right] \mu^Y (ds,dx).
\end{align*}}
We compensate the jumps (which is possible because of Assumption~$(iii)$) and define
\begin{align*}
M_t := \sum_{i \le d} \int_0^t \frac{\partial}{\partial x^i} G_f(s, Y_{s^-}) dY_s^i + \int_{[0,t] \times \RR^d} H_{G_f}(s,X_{s^-},x) \left [\mu^Y (ds,dx) - K^{\hat{Y}}_s(dx) dA^{\hat{Y}}_s\right ].
\end{align*}
Then we obtain
\begin{align*}
G_f(t, Y_t) = &~ G_f(0,x_0) + \int_0^t \frac{\partial}{\partial s} G_f(s,Y_{s^-}) b^{\hat{Y}}_s dA^{\hat{Y}}_s + M_t \\
&~ + \frac{1}{2} \sum_{i,j \le d} \int_0^t \frac{\partial^2}{\partial x^i x^j} G_f(s,Y_{s^-}) c^{\hat{Y}ij}_s dA^{\hat{Y}}_s + \int_{[0,t] \times \RR^d} H_{G_f}(s,Y_{s^-},x) K^{\hat{Y}}_s(dx)dA^{\hat{Y}}_s.
\end{align*}
To gain the local supermartingale property we show that the process $Z = (Z_t)_{t \in [0,T]}$ defined by
\begin{align}
\begin{split}
\label{eq:defZ}
Z_t := &~\int_0^t \left[ \frac{\partial}{\partial s} G_f(s,Y_{s^-}) b^{\hat{Y}}_s + \frac{1}{2} \sum_{i,j \le d} \frac{\partial^2}{\partial x^i x^j} G_f(s,Y_{s^-}) c^{\hat{Y}ij}_s\right.\\
&~\left. + \int_{\RR^d} H_{G_f}(s,Y_{s^-},x) K^{\hat{Y}}_u(dx) \vphantom{\sum_{i,j \le d} \frac{\partial^2}{\partial x^i x^j}} \right]dA^{\hat{Y}}_u
\end{split}
\end{align}
is $Q_2$ almost surely non-increasing. Therefore, we use Assumption~$(ii)$ to replace the term with the time derivative $\frac{\partial}{\partial s} G_f(s,Y_{s^-}) b^{\hat{Y}}_s$. Note that since we consider the semimartingale characteristics under the particular e.m.m., we have $(b^{\hat{Y}} \cdot A^{\hat{Y}})_t = t = (b^{\hat{X}} \cdot A^{\hat{X}})_t$ for all $t \in [0,T]$. Consequently, we have by Assumption~$(v)$ that $b^{\hat{Y}}_t dA^{\hat{Y}}_t = dt = b^{\hat{X}}_t dA^{\hat{Y}}_t$. As consequence we obtain for $Z_t$
{\small
\begin{align*}
\int_0^t \left[\frac{1}{2} \sum_{i,j \le d} \frac{\partial^2}{\partial x^i x^j} G_f(s,Y_{s^-}) \left (c^{\hat{Y}ij}_s - c^{\hat{X}ij}_s\right ) + \int_{\RR^d} H_{G_f}(s,Y_{s^-},x) \left (K^{\hat{Y}}_s(dx) - K^{\hat{X}}_s(dx)\right ) \right]dA^{\hat{Y}}_s.
\end{align*}}
By Assumption~$(i)$, $G_f$ is directionally convex in the second entry, which is equivalent to the second partial derivatives to be nonnegative for all $i,j$, see \citet[Theorem 3.12.2]{MS02}. It follows with Assumption~$(vi)$ that the first integrand is non-positive $dA^{\hat{Y}} \times Q_2$ almost surely.

To see that the second integrand is non-positive $dA^{\hat{Y}} \times Q_2$ almost surely as well, we find that $H_{G_f}(s, Y_{s^-},x)$ is directionally convex in $x$. This follows from the directional convexity of $G_f$: 
\begin{align*}
\frac{\partial^2}{\partial x^i x^j} H_{G_f}(s, Y_{s^-},x) =&~ \frac{\partial^2}{\partial x^i x^j} \left(G_f(s, Y_{s^-} + x) - G_f(s, Y_{s^-}) - \sum_{k \le d} \frac{\partial}{\partial x^k} G_f(s, Y_{s^-}) x^k \right)\\
=&~ \partial^2_{ij} G_f(s, Y_{u^-} + x) \ge 0.
\end{align*}
With Assumption~$(vi)$ it follows that the second integrand is non-positive $dA^{\hat{Y}} \times Q_2$ almost surely. Therefore, $-Z \in \mathscr{A}_{loc} ^+$ and $(G_f(t, Y_t))_{t \in [0,T]}$ is a local $Q_2$-supermartingale.

From the fact that $G_f(0,Y_0) = E_{Q_1}[f(X_T)]$ is integrable and from Assumption~$(iv)$ it follows that $(G_f(t, Y_t))_{t \in [0,T]}$ is a proper supermartingale.

If the inequalities in $(v)$ are reversed and the positive part of $(G_f(t, Y_t))_{t \in [0,T]}$ is of class (DL), the process $Z$ is in $\mathscr{A}^+_{loc}$ and hence $G_f$ is a $Q_2$-submartingale.
\end{proof}

\begin{remark}[Comments on the assumptions in Theorem \ref{thm:orderingdcxemm}]
\label{rem:onthm:orderingdcxemm}
Conditions $(iii)$ and $(iv)$ of Theorem \ref{thm:orderingdcxemm} are clearly unavoidable since we need the proper supermartingale property and in order to compensate the jumps. We comment on the other assumptions, while in Section \ref{sec:discussion} we give a more detailed discussion of the regularity conditions in this paper.
\begin{enumerate}
\item The regularity assumption $(i)$ is crucial for the applicability of It\^{o}'s formula. It is a common assumption in financial mathematics for the computation of option prices in a Markovian model by the PIDE method. For example in \citet{CT04} there are conditions given for the regularity of the functional $G_f$ in exponential L\'{e}vy models. 

\item The assumption of directional convexity of $G_f$ in $(ii)$ provides the positivity of the second derivative of $G_f$. This is necessary for the tractability of the relevant terms in It\^{o}'s formula. It is in particular fulfilled if the propagation of directional convexity property holds, i.e. for all directional convex functions $f$ it holds that $G_f \in \cF_{dcx}$. This assumption is made e.g. in \citet{BR06,BR08}. It holds in particular for processes with independent increments and for diffusion processes.

\item Assumption~$(ii)$ allows us to obtain a connection between the differential characteristics. If $X$ is a Markovian semimartingale and $f(X_T) \in L^1(Q_1)$, then the process $(G_f(t,X_t))_{t \in [0,T]}$ is a $Q_1$-martingale if the measure $Q_1$ preserves the Markov property. This is a consequence of the Markov property,
\begin{align*}
G_f(t,X_t) = E_{Q_1}[f(X_T)|X_t] = E_{Q_1}[f(X_T)|\cF_t],
\end{align*}
which is a martingale by construction. Hence, we can apply Lemma \ref{lemma:kolmogorovbackwardemm} to $G_f$ and obtain that $U^X_t G_f(t,X_{t^-}) = 0$ $dA^{\hat{X}} \times Q_1$ almost surely for all $t \in [0,T]$. Typically the differential characteristics of Markovian semimartingales are of the form $a(t,X_t)$, see \citet{Ci80}. Hence, we get that for all $x \in \supp\left((Q_1)^{X_{t^-}} \right)$ it holds that $U^X_t G_f(t,x) = 0$ $dA^{\hat{X}} \times Q_1$ almost surely for all $t \in [0,T]$ as well. If a semimartingale $Y$ fulfills\/ $\supp \left((Q_2)^{Y_{t^-}} \right) \subset \supp \left((Q_1)^{X_{t^-}} \right)$, we deduce that
\begin{align*}
U^X_t G_f(t,Y_{t^-}) = 0 
\end{align*}
$dA^{\hat{X}} \times Q_2$ almost surely for all $t \in [0,T]$.\\
Note that we can replace $Q_1$ by $Q_2$ since we assumed that $Q_1 \sim Q_2$. So Markov processes with a ``big'' support are candidate processes for the semimartingale $X$ in Theorem \ref{thm:orderingdcxemm}.\\
In a Markovian framework, $G$ is the transition operator of $X$ applied to $f$, a well understood object. This suggests that Theorem \ref{thm:orderingdcxemm} is particularly suitable if the semimartingale $X$ is a Markov process w.r.t $Q_1$. In particular, previous results in literature are special cases of Theorem \ref{thm:orderingdcxemm}.

\item Assumption~$(v)$ seems at first glance to be a severe restriction. This condition is fulfilled for example when we compare It\^{o} processes or when $Y$ is a Girsanov transform of $X$. In the setting of this section we have $Q_2 \sim Q_1$ since we assumed both measures to be equivalent to $P$. So this theorem is in particular useful if we compare one semimartingale under different e.m.m. In that case also the comparison of the characteristic simplifies since the quadratic variation of the continuous martingale part is unchanged.

Further, the integrator in a good version of the semimartingale characteristics can be chosen more or less freely. Only existence, not uniqueness is stated in \citet[Proposition II.2.9]{JS03}. So if we consider two semimartingales $X$ and $Y$ under the same measure we can, analogously to equation \eqref{eq:Afromniceversion}, find a joint integrator for a good version, for example the process
\begin{align*}
A =&~ \sum_{i \le d} \var(B^{\hat{X}i}) + \sum_{i,j \le d} \var(C^{\hat{X}ij}) + (|x|^2 \wedge 1) \ast \nu^{\hat{X}} \\
&~+ \sum_{i \le d} \var(B^{\hat{Y}i}) + \sum_{i,j \le d} \var(C^{\hat{Y}ij}) + (|x|^2 \wedge 1) \ast \nu^{\hat{Y}}.
\end{align*}

\item The inequalities between the differential characteristics are the key for the comparison result. In the proof we can see that it suffices to check the inequality between the kernels for the function $H_{G_f}$ only. Instead of directional convexity we can use any function class $\cF$ such that $H_{G_f} \in \cF$ for an ordering of the kernels.

We emphasize that in general $K^{\hat{X}}$ is not the kernel of the semimartingale characteristics of $\hat{X}$ under $Q_2$. However, $c^{\hat{X}}$ is the process from the differential characteristics of $\hat{X}$ under $Q_2$ since we use equivalent measures. This follows from the Girsanov theorem. Effectively we do not compare the semimartingale characteristics of $\hat{X}$ and $\hat{Y}$ under $Q_2$. We compare under $Q_2$ the semimartingale characteristics we get under the particular e.m.m. If $X$ and $Y$ are already local martingales we compare the differential characteristics under the same measure $P$. This is a special case of the theorem above and we will discuss the comparison under $P$ in Section \ref{sec:compsm} in more detail.

\item We could also demand that the inequalities in $(vi)$ hold $Q_2$ almost surely for all $t \in [0,T]$. However, the choice of the product measure is more general, even if it seems more complicated at first glance.

\item The assumption that $X$ and $Y$ start in the same point can be easily achieved by shifting one of the semimartingales. Depending on the aim of the comparison of $X$ and $Y$ this might not be reasonable. Then we can replace this assumption by demanding 
\begin{align*}
G_f(0,y_0) \le G_f(0,x_0).
\end{align*}
\end{enumerate}
\end{remark}

Next we derive an ordering result when the functional $G_f$ is a convex function in $x$. Therefore, we use the positive semidefinite order for matrices, also called Loewner order. Remind that for $A,B \in \RR^{d \times d}$ $A$ is said to be smaller than $B$ in the positive semidefinite order, if the matrix $B-A$ is positive semidefinite, i.e. for all $x \in \RR$ it holds $x' (B-A) x \ge 0$. We write $A \le_{psd} B$ if $A$ is smaller than $B$ in this order.

The following convex comparison theorem extends Theorem 2.6 in \citet{BR06}.

\begin{theorem}[Convex comparison under e.m.m.]
\label{thm:orderingcxemm}
Let $X, Y$ be semimartingales and let $X_0 = Y_0 = x_0 \in \RR^d$ almost surely. Let $f \in L^1 \left((Q_1)^{X_T} \right) \cap L^1 \left((Q_2)^{Y_T} \right) $ and assume that
\begin{itemize}
\item[(i)] $G_f \in C^{1,2}([0,T] \times \RR^d)$ and $G_f(t, \cdot) \in \cF_{cx}$ for all $t \in [0,T]$;

\item[(ii)] - $(v)$ of Theorem \ref{thm:orderingdcxemm} hold;

\item[(vi)] The differential characteristics are $dA^{\hat{Y}} \times Q_2$ almost surely ordered:
\begin{align*}
c_t^{\hat{Y}} \le_{psd} &~ c_t^{\hat{X}},\\
\int_{\RR^d} g(t,Y_{t^-},x) K^{\hat{Y}}_t(dx) \le&~ \int_{\RR^d} g(t,Y_{t^-},x) K^{\hat{X}}_t(dx), 
\end{align*}
where the second inequality holds for all $g(t,y,\cdot) \in \cF_{cx}$ such that the integrals exist.
\end{itemize} 

Then it holds
\begin{align*}
E_{Q_2}[f(Y_T)] \le E_{Q_1}[f(X_T)].
\end{align*}
If the inequalities in $(vi)$ are reversed and $(G_f(t,Y_t)^+)_{t \in [0,T]}$ is of class (DL), we get
\begin{align*}
E_{Q_2}[f(Y_T)] \ge E_{Q_1}[f(X_T)].
\end{align*}
\end{theorem}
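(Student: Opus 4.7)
The plan is to copy the martingale comparison argument of Theorem~\ref{thm:orderingdcxemm} and adapt only the step where directional convexity was used. First I would apply It\^o's formula to $G_f(t, Y_t)$, which is licit by Assumption~(i), compensate the jump integral by Assumption~(iii), and then substitute the identity from Assumption~(ii) to eliminate $\frac{\partial}{\partial s} G_f(s, Y_{s^-}) b_s^{\hat{Y}} \, dA_s^{\hat{Y}}$. By Assumption~(v) the integrator $A^{\hat{Y}}$ equals $A^{\hat{X}}$ and $b_s^{\hat{Y}} dA_s^{\hat{Y}} = dt = b_s^{\hat{X}} dA_s^{\hat{Y}}$, so the time derivative is replaced by minus the Kolmogorov operator of $X$, yielding exactly the same expression for the finite variation process $Z_t$ as in \eqref{eq:defZ}, but with the differences $c^{\hat{Y}ij}_s - c^{\hat{X}ij}_s$ and $K^{\hat{Y}}_s - K^{\hat{X}}_s$ appearing.

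To conclude the local supermartingale property of $G_f(t, Y_t)$ it suffices, as in the proof of Theorem~\ref{thm:orderingdcxemm}, to show that $-Z \in \mathscr{A}_{loc}^+$. The second-order contribution can be written as a trace,
\begin{align*}
\sum_{i,j \le d} \frac{\partial^2}{\partial x^i x^j} G_f(s, Y_{s^-}) \bigl(c^{\hat{Y}ij}_s - c^{\hat{X}ij}_s\bigr) = \operatorname{tr}\!\bigl(D^2 G_f(s, Y_{s^-}) \, (c^{\hat{Y}}_s - c^{\hat{X}}_s)\bigr).
\end{align*}
By Assumption~(i), $D^2 G_f(s, Y_{s^-})$ is symmetric positive semidefinite, and by Assumption~(vi), $c^{\hat{Y}}_s - c^{\hat{X}}_s$ is symmetric negative semidefinite in the Loewner order. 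The identity $\operatorname{tr}(PN) = \operatorname{tr}(P^{1/2} N P^{1/2})$ for symmetric $P \ge_{psd} 0$ then shows that the trace is non-positive $dA^{\hat{Y}} \times Q_2$ almost surely, which replaces the componentwise argument used in the directionally convex case.

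For the jump integrand I need $x \mapsto H_{G_f}(s, Y_{s^-}, x)$ to be convex, so that the comparison of the L\'evy kernels in Assumption~(vi) can be invoked with $g = H_{G_f}$. This is immediate from the definition \eqref{eq:Hforjumpintegral}: since $G_f(s, \cdot)$ is convex, the shift $x \mapsto G_f(s, Y_{s^-} + x)$ is convex, and subtracting the affine function $G_f(s, Y_{s^-}) + \sum_i \frac{\partial}{\partial x^i} G_f(s, Y_{s^-}) x^i$ preserves convexity. Hence the kernel comparison yields
\begin{align*}
\int_{\RR^d} H_{G_f}(s, Y_{s^-}, x) \bigl(K^{\hat{Y}}_s(dx) - K^{\hat{X}}_s(dx)\bigr) \le 0
\end{align*}
$dA^{\hat{Y}} \times Q_2$ almost surely.

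Combining these two bounds gives $-Z \in \mathscr{A}_{loc}^+$, so $(G_f(t, Y_t))_{t \in [0,T]}$ is a local $Q_2$-supermartingale. Integrability of $G_f(0, x_0) = E_{Q_1}[f(X_T)]$ together with the class (DL) condition from Assumption~(iv) upgrades this to a proper supermartingale, and the chain $E_{Q_2}[f(Y_T)] = E_{Q_2}[G_f(T, Y_T)] \le G_f(0, x_0) = E_{Q_1}[f(X_T)]$ yields the claim. The reverse inequality is symmetric: flipping the inequalities in (vi) turns both integrands in $Z$ non-negative, so $Z \in \mathscr{A}^+_{loc}$ and the class (DL) assumption on the positive part gives a submartingale. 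The only mildly delicate point is the trace inequality handling the Loewner comparison; everything else is a near-verbatim transcription of the directionally convex proof.
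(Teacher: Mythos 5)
Your proposal is correct and follows essentially the same route as the paper's proof: It\^o's formula, compensation of jumps, substitution of the Kolmogorov operator via Assumptions (ii) and (v), and then non-positivity of the two integrands in $Z$. The only cosmetic difference is in the linear-algebra step, where you establish $\operatorname{tr}\bigl(D^2 G_f\,(c^{\hat Y}-c^{\hat X})\bigr)\le 0$ via the square-root/cyclicity trick while the paper uses the eigendecomposition of $c^{\hat X}-c^{\hat Y}$; both are standard and equivalent.
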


\begin{proof}
We show that $(G_f(t,Y_t))_{t \in[0,T]}$ is a $Q_2$-supermartingale. Similarly as in the proof of Theorem \ref{thm:orderingdcxemm} we need to show, that the process
{\small
\begin{align*}
\int_0^t \left[\frac{1}{2} \sum_{i,j \le d} \frac{\partial^2}{\partial x^i x^j} G_f(s,Y_{s^-}) \left (c^{\hat{Y}ij}_s - c^{\hat{X}ij}_s\right ) + \int_{\RR^d} H_{G_f}(s,Y_{s^-},x) \left (K^{\hat{Y}}_s(dx) - K^{\hat{X}}_s(dx)\right ) \right]dA^{\hat{Y}}_s
\end{align*}}
is non-increasing $dA^{\hat{Y}} \times Q_2$ almost surely. By Assumption~$(vi)$ the matrix $-(c^{\hat{Y}}_t - c^{\hat{X}}_t) = c_t^{\hat{X}} - c_t^{\hat{Y}}$ is positive semidefinite for fixed $(\omega,t)$. Thus, the eigendecomposition has the form $(\sum_{k \le d} \lambda_k e_k^i e_k^j)_{i,j \le d}$ with eigenvalues $\lambda_k \ge 0$ and eigenvectors $e_k$. We get that the first integrand has the form 
\begin{align*}
- \frac{1}{2} \sum_{k \le d} \lambda_k \sum_{i,j \le d} \frac{\partial^2}{\partial x^i x^j} G_f(s,Y_{s^-}) e^i_k e^j_k = - \frac{1}{2} \sum_{k \le d} \lambda_k e'_k \frac{\partial^2}{\partial x^i x^j} G_f(s,Y_{s^-}) e_k
\end{align*}
which is non-positive $dA^{\hat{Y}} \times Q_2$ almost surely due to the positive semidefiniteness of the Hessian matrix of $G_f$. 

Analogously to the proof of Theorem \ref{thm:orderingdcxemm} we have that $H_{G_f}(s,Y_{s^-},x)$ is convex in $x$ since the second derivative in direction of $x$ is 
\begin{align*}
\frac{\partial^2}{\partial x^i x^j} H_{G_f}(s,Y_{s^-},x) =&~ \frac{\partial^2}{\partial x^i x^j} \left(G_f(s, Y_{s^-} + x) - G_f(s, Y_{s^-}) - \sum_{k \le d} \frac{\partial}{\partial x^k} G_f(s, Y_{s^-}) x^k \right)\\
=&~ \frac{\partial^2}{\partial x^i x^j} G_f(s, Y_{s^-} + x).
\end{align*}
Therefore, the Hessian matrix of $H_{G_f}$ is positive semidefinite and it follows that $H_{G_f}$ is convex in $x$. Consequently, the second integrand is non-positive $dA^{\hat{Y}} \times Q_2$ almost surely by Assumption~$(vi)$. By Assumption~$(iv)$ it follows that $(G_f(t,Y_t))_{t \in[0,T]}$ is a proper supermartingale.

If the inequalities in $(vi)$ are reversed and $(G_f(t,Y_t)^+)_{t \in [0,T]}$ is of class (DL), then $(G_f(t,Y_t))_{t \in[0,T]}$ is a submartingale.
\end{proof}

\begin{remark}
\label{rem:cons}
As seen in the proofs, the key inequality is
\begin{align*}
\frac{1}{2} \sum_{i,j \le d} \frac{\partial^2}{\partial x^i x^j} G_f(s,Y_{s^-}) \left (c^{\hat{Y}ij}_s - c^{\hat{X}ij}_s\right ) + \int_{\RR^d} H_{G_f}(s,Y_{s^-},x) \left (K^{\hat{Y}}_u(dx) - K^{\hat{X}}_u(dx)\right ) \le 0
\end{align*}
$dA^{\hat{Y}} \times Q_2$ almost surely. Thus, we can replace the ordering assumption on the semimartingale characteristics by this inequality. Then also the (directional) convexity of $G_f$ is not necessary anymore. 

This is a starting point for ordering results of other function classes (\citet{BR07a}). Based on this inequality and under the assumption of propagation of order, there is given a table with conditions on the semimartingale characteristics for the comparison of further function classes. The classes investigated therein are increasing, supermodular, convex and directionally convex as well as increasing supermodular, increasing convex and increasing directionally convex functions.
\end{remark}

The considerations in Remark \ref{rem:cons} lead to the following corollary giving a comparison result under more general conditions on $G_f$.

\begin{corollary}[general comaprison under e.m.m.]
\label{cor:orderingoassemm}
Let $X, Y$ be semimartingales and let $X_0 = Y_0 = x_0 \in \RR^d$ almost surely. Let $f \in L^1 \left((Q_1)^{X_T} \right) \cap L^1 \left((Q_2)^{Y_T} \right)$ and assume that $G_f \in C^{1,2}([0,T] \times \RR^d)$ and that $(ii)$--$(v)$ of Theorem \ref{thm:orderingdcxemm} hold. Further, let $dA^{\hat{Y}} \times Q_2$ almost surely
\begin{align}
\label{eq:orderingnoass1emm}
\frac{1}{2} \sum_{i,j \le d} \partial^2_{ij} G_f(s,Y_{s^-}) \left (c^{\hat{Y}ij}_s - c^{\hat{X}ij}_s\right ) + \int_{\RR^d} H_{G_f}(s,Y_{s^-},x) \left (K^{\hat{Y}}_s(dx) - K^{\hat{X}}_s(dx)\right ) \le 0.
\end{align}
Then it holds that 
\begin{align}
\label{eq:orderingnoassemm}
E_{Q_1}[f(X_T)] \le E_{Q_2}[f(Y_T)].
\end{align}
If the term in \eqref{eq:orderingnoass1emm} is non-negative and $(G_f(t,Y_t)^+)_{t \in [0,T]}$ is of class (DL), we get the reverse inequality in \eqref{eq:orderingnoassemm}.
\end{corollary}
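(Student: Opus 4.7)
The plan is to re-run the proof of Theorem \ref{thm:orderingdcxemm} essentially verbatim, but to bypass the step in which directional convexity of $G_f$ is combined with the ordering of characteristics to produce the decisive pointwise inequality: here the hypothesis \eqref{eq:orderingnoass1emm} \emph{is} that inequality, imposed as an assumption. So the argument is a stripped-down version of Theorem \ref{thm:orderingdcxemm} in which (directional) convexity of $G_f$ plays no role and, accordingly, no monotonicity assumption on the characteristics is required.

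First I would apply It\^o's formula to the linking process $(G_f(t, Y_t))_{t \in [0,T]}$, which is legitimate by the $C^{1,2}$ regularity, and then compensate the jumps using assumption~$(iii)$ to obtain a decomposition
\[
G_f(t, Y_t) = G_f(0, x_0) + M_t + Z_t,
\]
where $M$ is a local $Q_2$-martingale and $Z$ is a predictable process of finite variation whose integrand is precisely the bracket in \eqref{eq:defZ} (noting that $Y$ has vanishing spatial drift since it is a local $Q_2$-martingale). Next I would invoke assumption~$(ii)$ together with assumption~$(v)$ and the identity $b^{\hat{Y}}_t\,dA^{\hat{Y}}_t = dt = b^{\hat{X}}_t\,dA^{\hat{X}}_t$, which holds since $Q_1$ and $Q_2$ are equivalent martingale measures, to substitute out the time-derivative contribution $\frac{\partial}{\partial s} G_f(s, Y_{s^-}) b^{\hat{Y}}_s$ exactly as in the proof of Theorem \ref{thm:orderingdcxemm}. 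After this cancellation the integrand of $Z_t$ reduces to
\[
\frac{1}{2} \sum_{i,j \le d} \partial^2_{ij} G_f(s, Y_{s^-}) \bigl(c^{\hat{Y}ij}_s - c^{\hat{X}ij}_s\bigr) + \int_{\RR^d} H_{G_f}(s, Y_{s^-}, x) \bigl(K^{\hat{Y}}_s(dx) - K^{\hat{X}}_s(dx)\bigr),
\]
which is precisely the left-hand side of \eqref{eq:orderingnoass1emm}.

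By the hypothesis this integrand is non-positive $dA^{\hat{Y}} \times Q_2$ almost surely, so $-Z \in \mathscr{A}^+_{loc}$ and $(G_f(t, Y_t))_{t \in [0,T]}$ is a local $Q_2$-supermartingale; assumption~$(iv)$ upgrades it to a proper supermartingale, and comparing its expectation at $t = 0$ and $t = T$ delivers the stated comparison. The reversed case is entirely symmetric: a non-negative integrand combined with class~(DL) of the positive part of $G_f(\cdot, Y_\cdot)$ yields a submartingale. I do not expect any substantive obstacle, since the corollary is engineered so that the whole convexity-plus-order-of-characteristics half of Theorem \ref{thm:orderingdcxemm} is replaced at one stroke by \eqref{eq:orderingnoass1emm}; the only mild subtlety is the bookkeeping around $(ii)$ and $(v)$ needed to show that the time-derivative term cancels cleanly and the integrand of $Z$ collapses to exactly the expression in \eqref{eq:orderingnoass1emm}, but this manipulation is already carried out in full in the preceding theorem.
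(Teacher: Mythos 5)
Your proposal is correct and follows essentially the same route as the paper, whose own proof is just the one-line observation that \eqref{eq:orderingnoass1emm} makes the process $Z$ from \eqref{eq:defZ} monotone so that the argument of Theorem \ref{thm:orderingdcxemm} goes through; you have merely written out that argument in full. Note only that the supermartingale property yields $E_{Q_2}[f(Y_T)] \le E_{Q_1}[f(X_T)]$, so the direction printed in \eqref{eq:orderingnoassemm} appears to be a typo in the statement rather than a defect of your proof.
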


\begin{proof}
Equation \eqref{eq:orderingnoass1emm} is chosen in such a way that the process $Z$ defined in \eqref{eq:defZ} is non-increasing or non-decreasing and the assertion follows as in Theorem \ref{thm:orderingdcxemm}.
\end{proof}

\begin{remark}
Equation \eqref{eq:orderingnoass1emm} arises in a similar form in the field of model uncertainty in financial mathematics under the notion volatility misspecification, see \citet{KJS98}. There it is assumed that a market participant uses a model to price and hedge a European option which does not coincide with the real evolution of the underlying. Then the left side of inequality \eqref{eq:orderingnoass1emm} indicates the so-called tracking error which is the difference of the real price of the option and the price derived by the model of the market participant.
\end{remark}

As stated in Remark \ref{rem:onthm:orderingdcxemm} these kind of theorems are especially useful if we compare a single semimartingale under different equivalent martingale measures We now turn to this special case.

By Girsanov's theorem only the compensator of the jump measure changes while the predictable quadratic variation of the continuous martingale part and the increasing process of a good version of the semimartingale characteristics remain the same.

\begin{corollary}[Comparison of e.m.m.]
\label{cor:girsanovemm}
Let $X$ be a semimartingale, let $Q_1$ and $Q_2$ be equivalent martingale measures for $X$ and denote the particular semimartingale characteristics of $X$ by superscript. Assume that $f \in L^1 \left((Q_1)^{X_T} \right) \cap L^1 \left((Q_2)^{X_T} \right)$ and that
\begin{enumerate}
\item[(i)] $G_f \in C^{1,2}([0,T] \times \RR^d)$ and $G_f(t,\cdot) \in \cF_{dcx}$ (or $G_f(t,\cdot) \in \cF_{cx}$) for all $t \in [0,T]$;

\item[(ii)] $U_t^X G_f(t,X_{t^-}) = 0$ $dA^{\hat{X}} \times Q_1$ almost surely where $U_t^X$ is defined in \eqref{eq:kolmogorovbackwardemm}, here with semimartingale characteristics of $X$ under $Q_2$;

\item[(iii)] $\big| H_{G_f} \big| \ast \mu^X \in \mathscr{A}_{loc}^+$;

\item[(iv)] $(G_f(t, X_t)^-)_{t \in [0,T]}$ is of class (DL);

\item[(v)] The kernels $K^1$ and $K^2$ are $dA^{\hat{X}} \times Q_1$ almost surely ordered for all $t \in [0,T]$:
\begin{align*}
\int_{\RR^d} g(t,X_{t^-},x) K^1_t(dx) \le&~ \int_{\RR^d} g(t,X_{t^-},x) K^2_t(dx), 
\end{align*}
where the inequality holds for all $g(t,y,\cdot) \in \cF_{dcx}$ (or $g(t,y,\cdot) \in \cF_{cx}$) such that the integrals exist.
\end{enumerate} 

Then we obtain
\begin{align*}
E_{Q_1}[f(X_T)] \le E_{Q_2}[f(X_T)].
\end{align*}
If the inequalities in $(vi)$ are reversed and $(G_f(t, X_t)^+)_{t \in [0,T]}$ is of class (DL), we get 
\begin{align*}
E_{Q_1}[f(X_T)] \ge E_{Q_2}[f(X_T)].
\end{align*}
\end{corollary}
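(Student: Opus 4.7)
My plan is to obtain the corollary as a direct specialization of Theorem \ref{thm:orderingdcxemm} (respectively Theorem \ref{thm:orderingcxemm} in the convex case), by viewing the single semimartingale $X$ as two copies of itself, carried on the same stochastic basis but considered under the two equivalent martingale measures $Q_1$ and $Q_2$. Concretely, in applying the theorem I set the theorem's ``$X$'' to be the present $X$ with characteristics computed under $Q_2$, and the theorem's ``$Y$'' to be the present $X$ with characteristics computed under $Q_1$, so that the roles of the measures are reversed: $Q_1^{\mathrm{thm}} = Q_2$ and $Q_2^{\mathrm{thm}} = Q_1$. The theorem's conclusion $E_{Q_2^{\mathrm{thm}}}[f(Y_T^{\mathrm{thm}})] \le E_{Q_1^{\mathrm{thm}}}[f(X_T^{\mathrm{thm}})]$ then reads exactly $E_{Q_1}[f(X_T)] \le E_{Q_2}[f(X_T)]$, and since $X_0 = Y_0 = x_0$ trivially holds here, the initial value hypothesis of the theorem is automatic.

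It remains to verify that assumptions (i)--(vi) of Theorem \ref{thm:orderingdcxemm} are implied by the hypotheses of the corollary. Items (i), (iii) and (iv) translate verbatim. Item (ii) of the theorem asks that $U^X_t$ built from the characteristics of $X$ under $Q_1^{\mathrm{thm}} = Q_2$ vanish on $dA^{\hat{Y}} \times Q_2^{\mathrm{thm}} = dA^{\hat{X}} \times Q_1$ when evaluated at $(t,Y_{t^-}) = (t,X_{t^-})$, which is precisely (ii) of the corollary. For condition (v) of the theorem, I will exploit the freedom in the choice of predictable integrator given by \citet[Proposition II.2.9]{JS03}: formula \eqref{eq:Afromniceversion} applied under each measure produces two candidate integrators $A^1,A^2$ for $\hat{X}$, and I take $A^{\hat{X}} := A^1 + A^2$ as a common good version valid simultaneously under $Q_1$ and $Q_2$, so that $A^{\hat{Y}} = A^{\hat{X}}$ holds trivially.

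The key structural step is condition (vi). Here I invoke Girsanov's theorem in the form \citet[Theorem III.3.24]{JS03}: passing from $Q_1$ to the equivalent measure $Q_2$ changes only the drift characteristic and the compensator kernel of $X$, whereas the continuous quadratic variation characteristic $c^{\hat{X}}$ is invariant under the measure change. Consequently $c^{\hat{X}}$ under $Q_2$ coincides with $c^{\hat{X}}$ under $Q_1$, which gives trivially the ordering required in (vi) with equality, in both the directionally convex and positive semidefinite sense. The kernel ordering $\int g(t,X_{t^-},x)K^1_t(dx) \le \int g(t,X_{t^-},x)K^2_t(dx)$ for $g(t,y,\cdot) \in \cF_{dcx}$ (respectively $\cF_{cx}$) assumed in the corollary matches exactly the kernel inequality required by (vi) of the theorem after the role swap. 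Theorem \ref{thm:orderingdcxemm} (or \ref{thm:orderingcxemm}) then delivers the first inequality; the reverse inequality is obtained by interchanging $Q_1$ and $Q_2$ throughout and replacing the class (DL) hypothesis on the negative part by one on the positive part.

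The main obstacle I anticipate is purely bookkeeping: keeping track of which measure indexes which characteristic after the role swap, and justifying that a single integrator $A^{\hat{X}}$ can be chosen to work simultaneously under both equivalent measures. Once these are settled, the proof adds nothing genuinely new beyond what the two preceding theorems already provide; no further martingale identification, convexity propagation, or compensation argument is required.
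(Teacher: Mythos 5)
Your proposal is correct and follows essentially the same route as the paper, whose proof is simply the observation that the corollary is a direct specialization of Theorems \ref{thm:orderingdcxemm} and \ref{thm:orderingcxemm}; the details you supply (the role swap $Q_1^{\mathrm{thm}}=Q_2$, $Q_2^{\mathrm{thm}}=Q_1$, and the Girsanov-invariance of $c^{\hat X}$ and of the integrator $A^{\hat X}$) are exactly what the paper records in the paragraph preceding the corollary and in Remark \ref{rem:oncoremm}.
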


\begin{proof}
This follows directly from Theorem \ref{thm:orderingdcxemm} and Theorem \ref{thm:orderingcxemm}.
\end{proof}

\begin{remark}
\label{rem:oncoremm}
As remarked in Corollary \ref{cor:orderingoassemm} we do not need the assumption of convexity and directional convexity of $G_f$ since the terms with second partial derivatives vanish in It\^{o}'s formula. This is due to the fact that the semimartingale characteristic from the continuous martingale part $c$ remains the same when we change the measure. Assumption~$(v)$ then needs to be adapted to $H_{G_f}$ as in Corollary \ref{cor:orderingoassemm}. Having in mind that it suffices to have inequality of the kernels for $H_{G_f}$, the result also follows by means of It\^{o}'s formula. After the partial derivative in time is replaced, the only remaining term of finite variation then is non-increasing by assumption $(v)$.
\end{remark}

\section{Comparison under the same semimartingale measure}
\label{sec:compsm}

We turn in this section to the case, when we regard both semimartingales under $P$. As in Section \ref{sec:funceq} we restrict ourselves to special semimartingales.

We begin with a version of the comparison of processes $X$ and $Y$ in Theorem \ref{thm:orderingdcxemm} under the same semimartingale measure $P$. Let $X$ and $Y$ be special semimartingales, then the processes $\hat{X}$ and $\hat{Y}$ are special semimartingales and we can choose for both semimartingales the same process $A$ for a good version of the semimartingale characteristics under $P$, see Remark \ref{rem:onthm:orderingdcxemm}. Further, we choose the identity as truncation function. The canonical decomposition of $\hat{X}$ has the form:
\begin{align*}
\hat{X}_t = \hat{X}_0 + \left(0,X_t^c + x \ast \left (\mu^X - \nu\right )_t \right) + (b^{\hat{X}} \cdot A)_t,
\end{align*}
Analogously we have such a decomposition for $\hat{Y}$ and we use superscripts to point out to which process the characteristics belong. The functional $G_f$ is defined in equation \eqref{eq:backwardfunctional} as conditional expectation under the equivalent martingale measure $Q_1$ and needs to be adapted. We define the valuation operator 
\begin{align*}
G_f (t,x) := E[f(X_T)|X_t=x],
\end{align*}
where the conditional expectation is now with respect to $P$. Since in this setting the drift part is not only the identity we need to control additionally the first derivatives in It\^o's formula. We accomplish this by assuming that $G_f(t,\cdot)$ is an increasing function for all $t \in [0,T]$ in the poitwise ordering on $\RR^d$.

\begin{theorem}[Increasing directionally convex comparison under $P$]
\label{thm:orderingidcxp}
Let $X, Y$ be special semimartingales and let $X_0 = Y_0 = x_0$ almost surely and let $f \in L^1(P^{X_T}) \cap L^1 (P^{Y_T})$. Assume that
\begin{itemize}
\item[(i)] $G_f \in C^{1,2}([0,T] \times \RR^d)$ and $G_f(t, \cdot) \in \cF_{idcx}$ for all $t \in [0,T]$;

\item[(ii)] $\bar{U}^X_t G_f(t,Y_{t^-}) = 0$ holds $dA \times P$ almost surely for all $t \in [0,T]$ where $\bar{U}^X_t$ is defined in \eqref{eq:kolmogorovbackwardp} with the characteristic of $\hat{X}$ in it;

\item[(iii)] $\big| H_{G_f} \big| \ast \mu^Y \in \mathscr{A}_{loc}^+$;

\item[(iv)] $(G_f(t, Y_t)^-)_{t \in [0,T]}$ is of class (DL);

\item[(v)] The differential characteristics are $dA \times P$ almost surely ordered for all $i,j \le d$:
\begin{align*}
b_t^{\hat{Y} i} \le &~b_t^{\hat{X} i},\\
c_t^{\hat{Y} ij} \le &~ c_t^{\hat{X} ij},\\
\int_{\RR^d} g(t,Y_{t^-},x) K^{\hat{Y}}_t(dx) \le&~ \int_{\RR^d} g(t,Y_{t^-},x) K^{\hat{X}}_t(dx), 
\end{align*}
where the last inequality holds for all $g(t,y,\cdot) \in \cF_{idcx}$ such that the integrals exist.
\end{itemize}
Then it holds
\begin{align*}
E[f(Y_T)] \le E[f(X_T)].
\end{align*}
If the inequalities in $(v)$ are reversed and $(G_f(t, Y_t)^+)_{t \in [0,T]}$ is of class (DL), we obtain that
\begin{align*}
E[f(Y_T)] \ge E[f(X_T)].
\end{align*}
\end{theorem}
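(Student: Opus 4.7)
The plan is to mimic the proofs of Theorems \ref{thm:orderingdcxemm} and \ref{thm:orderingcxemm}, replacing Proposition \ref{lemma:kolmogorovbackwardemm} with Proposition \ref{lemma:kolmogorovbackwardp} and using the common integrator $A$ for $\hat{X}$ and $\hat{Y}$ under $P$ described in the paragraph preceding the statement. As before, the goal is to show that the linking process $(G_f(t,Y_t))_{t\in[0,T]}$ is a $P$-supermartingale; once this is established, the assertion follows from
\begin{align*}
E[f(Y_T)] = E[G_f(T,Y_T)] \le G_f(0,x_0) = E[f(X_T)].
\end{align*}

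First I would apply It\^o's formula to $G_f(t,Y_t)$, using $G_f\in C^{1,2}$ from $(i)$ and the canonical decomposition of $\hat{Y}$ with characteristics $(b^{\hat{Y}},c^{\hat{Y}},K^{\hat{Y}})$ relative to $A$. Assumption $(iii)$ lets me compensate the jump integral, and splitting $dY$ into the local martingale part $dY^c+x\ast(\mu^Y-\nu^{\hat{Y}})$ and the finite variation part $b^{\hat{Y}}\cdot A$ (exactly as in the proof of Proposition \ref{lemma:kolmogorovbackwardp}), I collect all local martingale contributions into a single local martingale $M_t$. The remaining finite variation part is
\begin{align*}
Z_t = \int_0^t \left[\frac{\partial}{\partial s}G_f \cdot b^{\hat{Y},0}_s + \sum_{i\le d}\frac{\partial}{\partial x^i}G_f\cdot b^{\hat{Y}i}_s + \frac{1}{2}\sum_{i,j\le d}\frac{\partial^2}{\partial x^i x^j}G_f\cdot c^{\hat{Y}ij}_s + \int_{\RR^d} H_{G_f}(s,Y_{s^-},x) K^{\hat{Y}}_s(dx)\right] dA_s,
\end{align*}
where all derivatives are evaluated at $(s,Y_{s^-})$. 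Now I use $(ii)$, i.e.\ $\bar{U}^X_s G_f(s,Y_{s^-})=0$, to eliminate the $\partial_s G_f$ term: since $\hat{X}$ and $\hat{Y}$ share the integrator $A$ and have identical time components ($b^{\hat{X},0}_s\,dA_s = ds = b^{\hat{Y},0}_s\,dA_s$), substitution yields
\begin{align*}
Z_t = \int_0^t \left[\sum_{i\le d}\frac{\partial G_f}{\partial x^i}(b^{\hat{Y}i}_s-b^{\hat{X}i}_s) + \frac{1}{2}\sum_{i,j\le d}\frac{\partial^2 G_f}{\partial x^i x^j}(c^{\hat{Y}ij}_s-c^{\hat{X}ij}_s) + \int_{\RR^d} H_{G_f}(s,Y_{s^-},x)\bigl(K^{\hat{Y}}_s-K^{\hat{X}}_s\bigr)(dx)\right]dA_s.
\end{align*}

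Next I verify that each of the three integrands is non-positive $dA\times P$ a.s. By $(i)$ $G_f(t,\cdot)\in\cF_{idcx}$, so $\partial_i G_f\ge 0$ and $\partial^2_{ij}G_f\ge 0$; combined with the ordering $b^{\hat{Y}i}_t\le b^{\hat{X}i}_t$ and $c^{\hat{Y}ij}_t\le c^{\hat{X}ij}_t$ from $(v)$, the first two integrands are $\le 0$. For the jump term I need $H_{G_f}(s,Y_{s^-},\cdot)\in\cF_{idcx}$: directional convexity in the third variable is exactly the computation already carried out in the proof of Theorem \ref{thm:orderingdcxemm}, while monotonicity follows from
\begin{align*}
\frac{\partial}{\partial y^k} H_{G_f}(s,Y_{s^-},y) = \frac{\partial G_f}{\partial x^k}(s,Y_{s^-}+y) - \frac{\partial G_f}{\partial x^k}(s,Y_{s^-}) \ge 0 \quad \text{for } y\ge 0,
\end{align*}
because directional convexity of $G_f$ implies that each $\partial_k G_f$ is increasing in every coordinate. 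Then the kernel ordering in $(v)$ for $g\in\cF_{idcx}$ applied to $g=H_{G_f}$ makes the third integrand $\le 0$.

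Consequently $-Z\in\mathscr{A}_{loc}^+$ and $(G_f(t,Y_t))_{t\in[0,T]} = G_f(0,x_0)+M_t+Z_t$ is a local $P$-supermartingale. The integrability of $G_f(0,x_0)=E[f(X_T)]$ together with $(iv)$ upgrades it to a proper supermartingale, yielding $E[f(Y_T)]\le E[f(X_T)]$. The reversed inequalities in $(v)$ make $Z$ non-decreasing, so $(G_f(t,Y_t))$ becomes a local submartingale, promoted to a true submartingale by class (DL) of its positive part. The principal new obstacle compared to Section \ref{sec:compemm} is the presence of the drift difference $b^{\hat{Y}i}_s-b^{\hat{X}i}_s$, which forces the enlargement from $\cF_{dcx}$ to $\cF_{idcx}$ and the need to check that $H_{G_f}$ inherits \emph{both} monotonicity and directional convexity from $G_f$; the substitution step via $(ii)$ is then routine.
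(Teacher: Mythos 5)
Your overall route coincides with the paper's: It\^o's formula applied to $G_f(t,Y_t)$ with the common integrator $A$, compensation of the jump integral via $(iii)$, elimination of the time derivative via $(ii)$ using $b^{\hat{Y},0}\,dA = dt = b^{\hat{X},0}\,dA$, sign analysis of the remaining finite-variation process $Z$, and the class-(DL) upgrade from local to proper supermartingale. The treatment of the drift and diffusion terms and of the reversed case is exactly as in the paper.

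There is, however, a genuine error in your treatment of the jump term. You claim $H_{G_f}(s,Y_{s^-},\cdot)\in\cF_{idcx}$ by computing $\partial_{y^k}H_{G_f}(s,Y_{s^-},y)=\partial_k G_f(s,Y_{s^-}+y)-\partial_k G_f(s,Y_{s^-})\ge 0$ ``for $y\ge 0$''. That inequality does hold on the positive orthant, but monotonicity on all of $\RR^d$ requires a non-negative gradient everywhere, and the very same argument (monotonicity of each $\partial_k G_f$ in every coordinate) shows that this gradient is $\le 0$ componentwise whenever $y\le 0$: already in $d=1$ with $G_f(t,x)=e^x$ one has $\partial_y H_{G_f}=e^{x+y}-e^x<0$ for $y<0$. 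So $H_{G_f}$ is directionally convex in the jump variable but \emph{not} increasing, and you cannot feed $g=H_{G_f}$ into the kernel ordering of $(v)$, which you only have for $g(t,y,\cdot)\in\cF_{idcx}$. The paper's proof sidesteps this by referring the jump term back to the argument of Theorem \ref{thm:orderingdcxemm}, i.e.\ it uses only the directional convexity of $H_{G_f}$ and thus implicitly reads the kernel hypothesis as an ordering over $\cF_{dcx}$ --- consistent with Theorem \ref{thm:orderingicxp}, where the kernel ordering is assumed over $\cF_{cx}$ rather than $\cF_{icx}$, and with item 5 of Remark \ref{rem:onthm:orderingdcxemm}, which points out that what is really needed is the ordering for the single function $H_{G_f}$. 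To close your argument you should either strengthen the third inequality in $(v)$ to all $g(t,y,\cdot)\in\cF_{dcx}$ or assume the kernel ordering directly for $H_{G_f}$; the monotonicity of $G_f$ is needed only for the drift term $\sum_{i}\partial_i G_f\,(b^{\hat{Y}i}-b^{\hat{X}i})$, not for the jump term.
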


\begin{proof}
Analogously to the comparison under equivalent martingale measures we show that $(G_f(t,Y_t))_{t \in [0,T]}$ is a $P$-supermartingale. It\^{o}'s formula yields
{\small
\begin{align*}
G_f(t, Y_t) =&~ G_f(0,x_0) + \int_0^t \frac{\partial}{\partial s} G_f(s,Y_{s^-}) b^{\hat{Y}}_s dA_s + \sum_{i \le d} \int_0^t \frac{\partial}{\partial x^i} G_f(s, Y_{s^-}) dY_s^i\\
&~ + \frac{1}{2} \sum_{i,j \le d} \int_0^t \frac{\partial^2}{\partial x^i x^j} G_f(s,Y_{s^-}) c^{\hat{Y}ij}_s dA_s\\ 
&~ + \int_{[0,t] \times \RR^d} \left[ G_f(s, Y_{s^-} + x) - G_f(s, Y_{s^-}) - \sum_{i \le d} \frac{\partial}{\partial x^i} G_f(s, Y_{s^-}) x^i \right] \mu^Y (ds,dx).
\end{align*}}
We compensate the jumps and use the canonical decomposition of $\hat{Y}$. This leads to
\begin{align*}
G_f(t, Y_t) =&~ G_f(0,x_0) + \int_0^t \frac{\partial}{\partial s} G_f(s,Y_{s^-}) b^{\hat{Y}}_s dA_s + \sum_{i \le d} \int_0^t \frac{\partial}{\partial x^i} G_f(s, Y_{s^-}) b^{\hat{Y} i}_s dA_s\\
&~ + \frac{1}{2} \sum_{i,j \le d} \int_0^t \frac{\partial^2}{\partial x^i x^j} G_f(s,Y_{s^-}) c^{\hat{Y}ij}_s dA_s + M_t \\
&~+ \int_{[0,t] \times \RR^d} H_{G_f}(s,Y_{s^-},x) K^{\hat{Y}}_s(dx)dA_s,
\end{align*}
where $M$ is the local martingale from the integrals with respect to the continuous martingale part of $Y$ and the compensated jumps. With similar arguments as in the proof of Theorem \ref{thm:orderingdcxemm} it suffices to show, that the following process $Z_t$ is non-increasing $dA \times P$ almost surely:
\begin{align*}
Z_t :=&~ \int_0^t \sum_{i \le d} \frac{\partial}{\partial x^i} G_f(s, Y_{s^-}) \left(b^{\hat{Y} i}_s - b^{\hat{X}i}_s \right) + \frac{1}{2} \sum_{i,j \le d} \frac{\partial^2}{\partial x^i x^j} G_f(s,Y_{s^-}) \left (c^{\hat{Y}ij}_s - c^{\hat{X}ij}_s\right ) \\ 
&~ + \int_{\RR^d} H_{G_f}(s,Y_{s^-},x) \left (K^{\hat{Y}}_s(dx) - K^{\hat{X}}_s(dx)\right ) dA_s.
\end{align*}
The first term in the integral is non-positive $dA \times P$ almost surely because of Assumption~$(v)$ and the fact that $G_f$ is increasing in $x$. The remaining terms are non-positive $dA \times P$ almost surely which can be seen as in the proof of Theorem \ref{thm:orderingdcxemm}. Assumption~$(iv)$ yields the proper supermartingale property.\\
If the inequalities in $(v)$ are reversed and $(G_f(t, Y_t)^+)_{t \in [0,T]}$ is of class (DL), the process $(G_f(t,Y_t))_{t \in [0,T]}$ is a submartingale.
\end{proof}

\begin{remark}
\begin{enumerate}
\item[1.]
Since we assumed the functional $G_f$ to be increasing in the second variable it is not necessary anymore that $Y_0 = X_0$. If $X_0 = x_0 \ge y_0 = Y_0$, the supermartingale property and the fact that $G_f$ is increasing in the second variable still yield the inequality of the expectations, 
\begin{align*}
E[f(Y_T)] = E[G_f(T,Y_T)] \le G_f(0,y_0) \le G_f(0,x_0) = E[f(X_T)].
\end{align*}
When $G_f$ is a submartingale, we need to impose $X_0 = x_0 \le y_0 = Y_0$ for an analog statement.

\item[2.] 
In contrast to the last section we compare in this framework the original semimartingale characteristics, cf. Remark \ref{rem:onthm:orderingdcxemm}.
\end{enumerate}
\end{remark}

The following comparison result for processes $X$ and $Y$ in the case that $G_f$ is increasing and convex in $x$ is the analogon of the comparison result in Theorem \ref{thm:orderingcxemm} under a semimartingale measure $P$.

\begin{theorem}[Increasing convex comparison under P]
\label{thm:orderingicxp}
Let $X, Y$ be special semimartingales and let $x_0 = X_0 \ge Y_0 = y_0$ almost surely. Let $f \in L^1 \left (P^{X_T}\right ) \cap L^1\left (P^{Y_T}\right )$ and assume that
\begin{itemize}
\item[(i)] $G_f \in C^{1,2}([0,T] \times \RR^d)$ and $G_f(t, \cdot) \in \cF_{icx}$ for all $t \in [0,T]$;

\item[(ii)] - $(iv)$ of Theorem \ref{thm:orderingidcxp} hold;

\item[(v)] The differential characteristics are $dA \times P$ almost surely ordered for all $t \in [0,T]$ and all $i \le d$:
\begin{align*}
b_t^{\hat{Y} i} \le &~b_t^{\hat{X} i},\\
c_t^{\hat{Y}} \le&_{psd} ~ c_t^{\hat{X}},\\
\int_{\RR^d} g(t,Y_{t^-},x) K^{\hat{Y}}_t(dx) \le&~ \int_{\RR^d} g(t,Y_{t^-},x) K^{\hat{X}}_t(dx), 
\end{align*}
where the last inequality holds for all $g(t,y,\cdot) \in \cF_{cx}$ such that the integrals exist.
\end{itemize} 

Then we have that
\begin{align*}
E[f(Y_T)] \le E[f(X_T)].
\end{align*}
If in $(v)$ the inequalities are reversed, $x_0 = X_0 \le Y_0 = y_0$ and $(G_f(t, Y_t)^+)_{t \in [0,T]}$ is of class (DL), we get
\begin{align*}
E[f(Y_T)] \ge E[f(X_T)].
\end{align*} 
\end{theorem}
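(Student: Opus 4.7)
The plan is to combine the arguments from the proofs of Theorems \ref{thm:orderingidcxp} and \ref{thm:orderingcxemm}: I will show that the linking process $(G_f(t,Y_t))_{t \in [0,T]}$ is a $P$-supermartingale and then close with
\begin{align*}
E[f(Y_T)] = E[G_f(T,Y_T)] \le G_f(0,y_0) \le G_f(0,x_0) = E[f(X_T)],
\end{align*}
where the last inequality uses that $G_f(0,\cdot)$ is increasing (from $\cF_{icx}$) together with the hypothesis $y_0 \le x_0$.

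The first step is to apply It\^o's formula to $G_f(t,Y_t)$, which is legal by (i), and to compensate the jump integral using (iii), exactly as in the proof of Theorem \ref{thm:orderingidcxp}. Plugging in the canonical decomposition of $\hat Y$ under $P$ (with the common integrator $A$ from the beginning of Section \ref{sec:compsm}) and collecting all local-martingale pieces into one term $M_t$, Assumption (ii) (that $\bar U^X_s G_f(s,Y_{s^-}) = 0$) lets me replace the time-derivative contribution $b^{\hat Y}_s \partial_s G_f(s,Y_{s^-})$ by the negative of the remaining terms of $\bar U^X$, which are built from the $\hat X$-characteristics. The predictable finite-variation residue then takes the form
\begin{align*}
Z_t = \int_0^t \Biggl[&~ \sum_{i \le d} \partial_{x^i} G_f(s,Y_{s^-}) \bigl( b^{\hat Y i}_s - b^{\hat X i}_s \bigr) + \frac{1}{2} \sum_{i,j \le d} \partial^2_{ij} G_f(s,Y_{s^-}) \bigl( c^{\hat Y ij}_s - c^{\hat X ij}_s \bigr) \\
&~ + \int_{\RR^d} H_{G_f}(s,Y_{s^-},x) \bigl( K^{\hat Y}_s(dx) - K^{\hat X}_s(dx) \bigr) \Biggr] dA_s,
\end{align*}
and it remains to check that the integrand is non-positive $dA \times P$ almost surely.

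The drift term is $\le 0$ because (v) gives $b^{\hat Y i}_s \le b^{\hat X i}_s$ and the increasing part of (i) forces $\partial_{x^i} G_f \ge 0$. For the diffusion term I reproduce the eigendecomposition trick of Theorem \ref{thm:orderingcxemm}: diagonalise $c^{\hat X}_s - c^{\hat Y}_s = \sum_k \lambda_k e_k e_k'$ with $\lambda_k \ge 0$ (available from (v) in the Loewner sense) and use positive semidefiniteness of the Hessian of $G_f(s,\cdot)$ (convexity in (i)) to rewrite the term as $-\tfrac{1}{2} \sum_k \lambda_k e_k' \bigl(\partial^2_{ij} G_f(s,Y_{s^-})\bigr)_{i,j \le d} e_k \le 0$. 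For the jump term, the computation of Theorem \ref{thm:orderingcxemm} already shows $H_{G_f}(s,Y_{s^-},\cdot) \in \cF_{cx}$, so the kernel inequality in (v) applies to $g = H_{G_f}$ and yields a non-positive contribution. Consequently $-Z \in \mathscr{A}^+_{loc}$, $(G_f(t,Y_t))$ is a local $P$-supermartingale, and (iv) together with integrability of $G_f(0,x_0)$ upgrades this to a proper supermartingale. The reversed-inequality case is the symmetric argument with $x_0 \le y_0$ and (DL) of the positive part. No genuinely new obstacle appears: the psd/eigendecomposition handling of the $c$-term is exactly the one in Theorem \ref{thm:orderingcxemm}, the new drift contribution is controlled by the monotonicity of $G_f$, and the jump plus localisation steps are identical to those in Theorem \ref{thm:orderingidcxp}.
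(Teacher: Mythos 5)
Your proposal is correct and follows essentially the same route as the paper: the paper's proof likewise reduces to showing that the predictable finite-variation residue $Z$ is non-increasing, handles the drift term via monotonicity of $G_f$ and the ordering of the $b$'s, and defers the $c$-term (eigendecomposition of $c^{\hat X}-c^{\hat Y}$) and the jump term (convexity of $H_{G_f}$ in $x$) to the argument of Theorem \ref{thm:orderingcxemm}, closing with the class (DL) condition and the chain $E[G_f(T,Y_T)] \le G_f(0,y_0) \le G_f(0,x_0)$. You have merely written out the steps the paper cites by reference.
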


\begin{proof}
We show, that the process
\begin{align*}
\int_0^t &\left[\sum_{i \le d} \frac{\partial}{\partial x^i} G_f(s, Y_{s^-}) \left(b^{\hat{Y} i}_s - b^{\hat{X}i}_s \right) + \frac{1}{2} \sum_{i,j \le d} \frac{\partial^2}{\partial x^i x^j} G_f(s,Y_{s^-}) \left (c^{\hat{Y}ij}_s - c^{\hat{X}ij}_s\right ) \right.\\
& + \left. \int_{\RR^d} H_{G_f}(s,Y_{s^-},x) \left (K^{\hat{Y}}_s(dx) - K^{\hat{X}}_s(dx)\right ) \vphantom{\sum_{i,j \le d} \frac{\partial^2}{\partial x^i x^j}} \right]dA^{\hat{Y}}_s
\end{align*}
is non-increasing $P$ almost surely. Then the assertion follows as in Theorem \ref{thm:orderingidcxp}. The first term in the integral is non-positive due to Assumption~$(v)$ and the fact that $G_f$ is increasing in the second variable. The remaining part is non-positive similarily as in the proof of Theorem \ref{thm:orderingcxemm}. By Assumption~$(iv)$ it follows that $(G_f(t,Y_t))_{t \in[0,T]}$ is a proper supermartingale.\\
If the inequalities in $(v)$ are reversed, $x_0 = X_0 \le Y_0 = y_0$ and $(G_f(t, Y_t)^+)_{t \in [0,T]}$ is of class (DL), it is a submartingale.
\end{proof}

As under e.m.m., the key inequality of the proof can be used to replace the convexity assumption by a general form of conditions.

\begin{corollary}[General comparison condition]
Let $X, Y$ be special semimartingales, let $x_0 = X_0 \ge Y_0 = y_0$ and let $f \in L^1 \left (P^{X_T}\right ) \cap L^1\left (P^{Y_T}\right )$. Assume that $G_f \in C^{1,2}([0,T] \times \RR^d)$ and that assumptions $(ii)$--$(iv)$ of Theorem \ref{thm:orderingidcxp} hold. Further, assume that $dA \times P$ almost surely
\begin{align}
\begin{split}
\label{eq:orderingnoass1p}
&~\sum_{i \le d} \frac{\partial}{\partial x^i} \cG_f(s, Y_{s^-}) \left(b^{\hat{Y} i}_s - b^{\hat{X}i}_s \right) + \frac{1}{2} \sum_{i,j \le d} \frac{\partial^2}{\partial x^i x^j} G_f(s,Y_{s^-}) \left (c^{\hat{Y}ij}_s - c^{\hat{X}ij}_s\right ) \\
&~ + \int_{\RR^d} H_{G_f}(s,Y_{s^-},x) \left (K^{\hat{Y}}_s(dx) - K^{\hat{X}}_s(dx)\right ) \le 0.
\end{split}
\end{align}
Then it holds that 
\begin{align*}
E[f(X_T)] \le E[f(Y_T)].
\end{align*}
If the inequality \eqref{eq:orderingnoass1p} is reversed, $x_0 = X_0 \le Y_0 = y_0$ and $(G_f(t, Y_t)^+)_{t \in [0,T]}$ is of class (DL), we get $E[f(X_T)] \ge E[f(Y_T)]$.
\end{corollary}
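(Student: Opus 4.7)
The plan is to transplant the proof of Theorem \ref{thm:orderingidcxp} verbatim, replacing its term-by-term sign check of the integrand of $Z$ (which was obtained from the specific orderings of differential characteristics in its hypothesis (v)) with the single pointwise inequality now postulated in hypothesis \eqref{eq:orderingnoass1p}. All of the remaining infrastructure---It\^o's formula applied to a $C^{1,2}$ propagation operator, compensation of the jump integral by (iii), and the upgrade from local to proper (super/sub)martingale by (iv)---is reused without change.

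Concretely: first, apply It\^o's formula to $(G_f(t, Y_t))_{t \in [0,T]}$. This is legitimate because $\hat Y$ is a special semimartingale with the identity as truncation function and $G_f \in C^{1,2}$, and it yields the same five-term decomposition that opens the proof of Theorem \ref{thm:orderingidcxp}. Second, compensate the jump integral using (iii) and split $\sum_i \int_0^t \partial_{x^i} G_f(s, Y_{s^-})\, dY^i_s$ via the canonical decomposition of $Y$ into its $\hat Y$-drift part $\sum_i \int_0^t \partial_{x^i} G_f\, b^{\hat Y i}_s\, dA_s$ and a local-martingale part. Gathering both local-martingale contributions into a single local martingale $M$, the finite-variation part of $G_f(t, Y_t)$ has $dA$-integrand $\partial_s G_f\, b^{\hat Y}_s + \sum_i \partial_{x^i} G_f\, b^{\hat Y i}_s + \tfrac{1}{2} \sum_{i,j} \partial^2_{ij} G_f\, c^{\hat Y ij}_s + \int_{\RR^d} H_{G_f}(s, Y_{s^-}, x)\, K^{\hat Y}_s(dx)$.

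Third, use hypothesis (ii), i.e.\ $\bar U^X_s G_f(s, Y_{s^-}) = 0$, to substitute out the $\partial_s G_f$ term. The key observation is that the time-component contribution is identical under $\hat X$ and $\hat Y$, since $b^{\hat X}_s\, dA_s = dt = b^{\hat Y}_s\, dA_s$ under the common good version $A$; consequently, the substitution replaces $\partial_s G_f\, b^{\hat Y}_s\, dA_s$ by $-\bigl[\sum_i b^{\hat X i}_s \partial_{x^i} G_f + \tfrac{1}{2} \sum c^{\hat X ij}_s \partial^2_{ij} G_f + \int H_{G_f} K^{\hat X}_s(dx)\bigr] dA_s$. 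After substitution, the finite-variation part of $G_f(t, Y_t)$ collapses to $Z_t = \int_0^t \bigl[\sum_i \partial_{x^i} G_f\,(b^{\hat Y i}_s - b^{\hat X i}_s) + \tfrac{1}{2} \sum_{i,j} \partial^2_{ij} G_f\,(c^{\hat Y ij}_s - c^{\hat X ij}_s) + \int H_{G_f}(K^{\hat Y}_s - K^{\hat X}_s)(dx)\bigr]\, dA_s$, whose integrand is precisely the left-hand side of \eqref{eq:orderingnoass1p}.

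Fourth, hypothesis \eqref{eq:orderingnoass1p} asserts that this integrand is $dA \times P$-a.s.\ non-positive, so $-Z \in \mathscr A^+_{loc}$ and $G_f(t, Y_t) = G_f(0, x_0) + Z_t + M_t$ is a local supermartingale; assumption (iv) upgrades it to a proper supermartingale. The stated comparison bound $E[f(X_T)] \le E[f(Y_T)]$ then follows by the same terminal-time chain that closes Theorem \ref{thm:orderingidcxp}'s proof, using the terminal identification $G_f(T, Y_T) = f(Y_T)$, the initial identification $G_f(0, x_0) = E[f(X_T)]$, the initial ordering $X_0 \ge Y_0$, and the monotonicity inherited from the setting of Theorem \ref{thm:orderingidcxp}. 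The reversed-sign case---integrand of \eqref{eq:orderingnoass1p} non-negative, $x_0 \le y_0$, and (DL) imposed on the positive part---is fully symmetric, yielding a local submartingale and the reversed conclusion. The main subtle point is the substitution in the third step: Proposition \ref{lemma:kolmogorovbackwardp} only provides vanishing of $\bar U^X$ at $X_{t^-}$, whereas we must evaluate at $Y_{t^-}$, and hypothesis (ii) is exactly the standing assumption that bridges this gap.
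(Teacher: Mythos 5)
Your proposal is correct and follows essentially the same route as the paper, whose own proof of this corollary is a one-line reference back to the argument of Theorem \ref{thm:orderingicxp}: you rerun the It\^o-formula/compensation/substitution machinery verbatim and replace the term-by-term sign check with the single postulated inequality \eqref{eq:orderingnoass1p}. The only point worth noting is that the closing step $G_f(0,y_0)\le G_f(0,x_0)$ still requires monotonicity of $G_f(0,\cdot)$, which you (like the paper) import from the ``setting'' of Theorem \ref{thm:orderingidcxp} even though condition $(i)$ is not formally among the corollary's hypotheses.
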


\begin{proof}
Similarly as in the proof of Theorem \ref{thm:orderingicxp} we obtain that under these assumptions $(G_f(t,Y_t))_{t \in [0,T]}$ is a supermartingale (submartingale if the inequality is inverse). 
\end{proof}

\section{Discussion of assumptions and examples}
\label{sec:discussion}

In this section we describe several approaches to establish the regularity conditions of the comparison results and give examples. The focus is on the question of differentiability and convexity. It is clear that the regularity and (directional) convexity condition on $G_f$ only depend on the semimartingale which is used in the definition of $G_f$.

For probability measures $P$ and $Q$ on a space $(\Omega, \cF)$ and a class of integrable functions $\mathfrak{F}$, $P$ is said to be smaller than $Q$ in the \emph{integral stochastic order generated by $\mathfrak{F}$}, $P \preceq_{\mathfrak{F}} Q$ if
\begin{align*}
\int f dP \le \int f dQ,~ \text{for all}~ f \in \mathfrak{F}~.
\end{align*}
So in terms of integral stochastic orders we state in Section \ref{sec:compemm} conditions for the ordering $\left(Q_2\right)^{Y_T} \preceq_{\mathfrak{F}} \left(Q_1\right)^{X_T}$ and in Section \ref{sec:compsm} for the ordering $P^{Y_T} \preceq_{\mathfrak{F}} P^{X_T}$ for some function class $\mathfrak{F}$. The question arises what function classes are well fitting with theses conditions. So far we did not impose conditions on the function $f$ under consideration but only on the functional $G_f$.

The section is organized as follows. We first discuss approaches from the theory of Markov processes to conclude differentiability of $G_f$. In particular for smooth functions a direct argument for differentiability can be given. Therefore, we recapitulate some insights from the PIDE method in option pricing. Then we proceed with another ansatz in the framework of integral stochastic orders. Afterwards we deal with the issue of convexity and directional convexity of $G_f$ and in particular remind some approaches from the literature and give corresponding references. We conclude this section with some explicit examples of processes which can be compared with the theorems of this paper.

\subsection{Differentiability}
\label{subsubsec:regularity}

There are various approaches in the literature to establish the regularity assumptions on $G_f$. In general, the representation
\begin{align}
\label{eq:G_g_ausgeschrieben}
G_f(t,x) = E[f(X_T)|X_t=x] = \int_{\RR^d} f(y) P^{X_T|X_t =x}(dy).
\end{align}
suggests that differentiability of $G_f$ is mainly an issue of the conditional distribution. For Markov processes this question is a well studied object. Also for the computation of option prices this question has been investigated in many papers in particular in connection with the PIDE method. For L\'evy processes differentaibility can be shown by a convolution argument (see \citet{CT04}, \citet{Gl10}). Assume that a L\'evy process $X_t$ possesses a density $p_t$ with respect to the Lebesgue measure. Due to the temporal and spatial homogeneity of L\'{e}vy processes we can simplify the conditional expectation
\begin{align}
\begin{split}
\label{eq:G_gconvolutionLevy}
G_f(t,x) =&~ E[f(X_T)|X_t = x] \\
=&~ E[f(X_{T-t} +x)]\\
=&~ \int_{\RR^d} f(y +x) p_{T-t}(y) dy\\
=&~ \int_{\RR^d} \tilde{f}(- x - y) p_{T-t}(y) dy\\
=&~ \tilde{f} \ast p_{T-t}(-x),
\end{split}
\end{align} 
where $\tilde{f}(x) = f(-x)$. If the density $p$ is twice continuously differentiable in $x$ so is $G_f$. In \citet[Proposition 3.12]{CT04} conditions are given so that the density of a L\'{e}vy process is smooth. More generally we can check the number of times of continuous differentiability of a density with the help of its associated characteristic function, see \citet[Proposition 28.1]{Sa99}. If for the characteristic function $\hat{\mu}$ of a measure $\mu$ it holds that $\int_{\RR^d} |z|^n |\hat{\mu}(z)| dz < \infty$ for a $n \in \NN$, then $\mu$ has a $C^n$ Lebesgue density.

For smooth functions satisfying some Lipschitz conditions, smoothness of $G_f$ can be shown directly (without smoothness of the density). For a smooth Lipschitz continuous function $f$, we obtain by dominated convergence
\begin{align*}
\frac{\partial}{\partial x^i} G_f(t,x) =&~ \lim_{h \to 0} \frac{E[f(X_{T-t} + x + h e_i)- f(X_{T-t} +x)]}{h}\\
=&~ E\left[ \lim_{h \to 0} \frac{f(X_{T-t} + x + h e_i)- f(X_{T-t} + x)}{h} \right]\\
=&~ E\left [ \frac{\partial}{\partial x^i} f(X_{T-t} - x)\right ].
\end{align*}
Similarly we get $\frac{\partial^2}{\partial x^i x^j} G_f(t,x) = E[ \frac{\partial^2}{\partial x^i x^j} f(X_T - x)]$ if the derivative of $f$ is Lipschitz continuous. This is continuous if $\frac{\partial^2}{\partial x^i x^j} f$ is Lipschitz continuous.

In this conection it is of interest that in the theory of stochastic orders it has been established that several stochastic orders can be generated by classes of smooth functions, see \citet{MS02}. Therefore, in the framework of L\'evy processes, we get directly from equation \eqref{eq:G_gconvolutionLevy} that $G_f$ is smooth in $x$ with no further assumptions on the density. In particular, the directionally convex order and increasing directionally convex order are integral stochastic orders which are generated by smooth functions.

For the differentiability in $t$ we consider a time-homogeneous Markov process $X$. Assume that its transition operators $(T_t)_{t\ge 0}$ form a strongly continuous semigroup. Then we have $G_f(t,\cdot) = T_{T-t}f(\cdot)$ and by semigroup theory $G_f$ is differentiable in $t$. The continuity of the derivative then follows because the semigroup of a Markov process $(T_t)_{0 \le t \le T}$ with generator $A$ can be represented as solution to an evolution problem( c.f. \citet{EK05}), i.e. for $f \in \cD(A)$ such that $Af \in \cD(A)$ it holds:
\begin{align}
\label{eq:semigroupcontinuityGg}
\frac{d}{dt} T_t f = T_t A f.
\end{align}
The term $Af$ does not depend on time and $T_t$ is assumed to be continuous. In the time-inhomogeneous case we can achieve continuous differentiability in time analogously with the theory of evolution systems. Then we obtain an equation similar to \eqref{eq:semigroupcontinuityGg}.

In the case of L\'{e}vy processe, this holds for functions in $C_0(\RR^d)$, i.e. continuous functions vanishing at infinity. For these functions the transition operators of a L\'{e}vy process form a strongly continuous semigroup, see \citet[Theorem 31.5]{Sa99} .

\subsection{Convexity and directional convexity}

Concerning convexity and directional convexity the \emph{propagation of order property} for a function class $\mathfrak{F}$ has turned out to be useful, i.e. $f \in  \mathfrak{F}$ implies $G_f \in \mathfrak{F}$. Therefore properties for $G_f$, as for example convexity, can be inferred from those of $f$. For papers in this field see e.g. \citet{Bm96}, \citet{Ma99}, \citet{KJS98}, \citet{BJ00}, \citet{GM02} and \citet{BR06,BR07a}. The results in this direction use various methods as establishing a Cauchy problem  for $G_f$ in the case of one-dimensional diffusions and reduction to the non-crossing property by the Feynman--Kac formula (see \citet{Bm96} and \citet{KJS98}) or using independent increments as \citet{GM02}. In \citet{BR06,BR07a} an approximation argument is used. A coupling aproach is used in \citet{Ho98}. A detailed exposition of these approaches is given in \citet{Ko19}

\subsection{Concrete examples}

In the sequel we give some explicit examples for semimartingales which fulfill the conditions of the comparison results. Throughout this section we assume that $G_f$ is continuously differentiable in time and focus on the differentiability in space.

We begin with the assumption that $X$ is a one-dimensional L\'{e}vy process. Then by the considerations above we conclude that convexity and directional convexity is propagated. Also the differentiability in time is straightforward if we restrict ourselves to functions in $C_0(\RR^d)$. In addition we can use a result from \citet{Sa99} which characterises the support of a one-dimensional L\'{e}vy process.

\begin{example}[Theorems \ref{thm:orderingidcxp} and \ref{thm:orderingicxp} for $\RR$-valued L\'{e}vy processes and It\^{o} semimartingales]
\label{ex:exampleorderingp}
We consider a comparison under $P$. Let $X$ be an $\RR$-valued L\'{e}vy process with differential semimartingale characteristic $(b,c^2,K)$, where $b \in \RR$, $c \in \RR_+$ and $K$ is a L\'{e}vy measure. We assume that $X$ is a type C L\'{e}vy process, i.e. either $c^2 > 0$ or $\int_{|x| \le 1} |x| K(dx) = \infty$. Additionally the L\'{e}vy measure is assumed to fulfill
\begin{align}
\label{eq:levyprocessspecial}
\int_{|x|>1} |x| K(dx) < \infty
\end{align}
and 
\begin{align}
\label{eq:existencesmoothdensity}
\liminf_{\varepsilon \downarrow 0} \varepsilon^{-c} \int_{-\varepsilon}^{\varepsilon} |x|^2 K(dx) > 0,
\end{align}
where $c \in (0,2)$. The first condition assures that $X$ is a special semimartingale, see \citet[Lemma 4.5]{RS11}. The second condition yields the existence of a smooth density, see \citet[Proposition 3.12]{CT04}.

Further, let $Y = (Y_t)_{0 \le t \le T}$ be a special It\^{o} semimartingale in the sense of \citet{Ci80}, i.e. a special semimartingale of the form 
\begin{align*}
Y_t = y_0 + \int_0^t \beta_s ds + \int_0^t \delta_s dB_s + \int_0^t \int_\RR y \tilde{\mu}^Y(ds,dy).
\end{align*}
Here $y_0 \in \RR$, $\beta$ and $\delta$ are adapted processes such that the integral exist, $(B_t)_{0 \le t \le T}$ is a standard Brownian motion and $\tilde{\mu}^Y$ is the compensated jump measure of $Y$. The compensator is of the form $\nu(dt,dx) = dt~ n_t(dx)$.\\
It follows that the identity is an integrator for a good version of the semimartingale characteristics for $X$ and $Y$. Also we can use the identity in the good version of $\hat{X}$ and $\hat{Y}$ and do not need to adapt it as in Remark \ref{rem:onthm:orderingdcxemm}. Besides the differential characteristics of $X$ and $Y$ are the differential characteristics which occur in the space dimensions of $\hat{X}$ and $\hat{Y}$ respectively. We consider an integrable increasing convex or increasing directionally convex function $f$.

From the independence of increments of $X$ we obtain propagation of increasing convexity and increasing directional convexity. By \eqref{eq:existencesmoothdensity} $X$ possesses a smooth Lebesgue density and it follows that $G_f \in C^{1,2}$. Thus, condition $(i)$ of Theorems \ref{thm:orderingidcxp} and \ref{thm:orderingicxp} is fulfilled.

From the Markov property of $X$ we get that the functional $G_f$ can be written as
\begin{align*}
G_f(t,X_t) = E[f(X_T)|X_t] = E[f(X_T)|\cF_t],
\end{align*}
which is a martingale by construction. Therefore, Lemma \ref{lemma:kolmogorovbackwardp} can be applied. Further, since $X$ is a type C L\'{e}vy process, we have by \citet[Theorem 24.10]{Sa99} that $\supp(P^{X_t}) = \RR$ for all $t \ge 0$. Hence, we get for all $x \in \RR$ and all $t \in [0,T]$ that
\begin{align*}
\bar{U}^X_t G_f(t,x) = 0,
\end{align*}
$dt \times P$ almost surely. It follows that
\begin{align*}
\bar{U}^X_t G_f(t,Y_{t^-}) = 0,
\end{align*}
$dt \times P$ almost surely for all $t \in [0,T]$. Altogether condition $(ii)$ is fulfilled. 
This holds independently from the choice of $Y$, see Remark \ref{rem:onthm:orderingdcxemm}. This consideration shows that type C L\'{e}vy processes are candidates for the semimartingale $X$ in the theorems stated in this chapter. Further, we assume the conditions $(iii)$ and $(iv)$. 

If now the differential characteristics are $dt \times P$ almost surely ordered,
\begin{align*}
\beta_t \le&~ b\\
\delta^2_t \le&~ c\\
\int_{\RR} g(t,Y_{t^-},x) \nu^Y_t(dx) \le&~ \int_{\RR} g(t,Y_{t^-},x) K(dx),
\end{align*}
for all $g(t,y,\cdot) \in \cF_{idcx}~ \text{or}~ \cF_{icx}$ such that the integrals exist, we get by Theorem \ref{thm:orderingidcxp} and \ref{thm:orderingicxp} that
\begin{align*}
E[f(Y_T)] \le E[f(X_T)]. 
\end{align*}
Note that since we assumed the semimartingales to be one-dimensional, the second inequality yields also the inequality in the positive semidefinite order.\\
As usual a reverse ordering in the differential characteristics provides the inverse inequality.
\end{example}

\begin{remark}
\begin{itemize}
\item[1.] The condition that $X$ is of type C can be modified. It is used to obtain that $\supp\left (P^{X_t}\right ) = \RR$ for all $t \in [0,T]$. In \citet[Theorem 24.10]{Sa99} there is another condition for this support, namely $0 \in \supp(\nu)$, $\supp(\nu) \cap (0,\infty) \neq \emptyset$ and $\supp(\nu) \cap (-\infty,0) \neq \emptyset$.

\item[2.] If we are interested in the integral stochastic order generated by increasing directionally convex functions, we can omit the condition on the L\'{e}vy measure \eqref{eq:existencesmoothdensity}. By \citet[Theorem 3.12.9]{MS02} this stochastic order is generated by infinitely differentiable directionally convex functions and we hence do not need a smooth density. This is a consequence of the convolution argument in the last section, equation \eqref{eq:G_gconvolutionLevy}. 
\end{itemize}
\end{remark}

Next we give an example for a comparison of a L\'{e}vy process and an It\^{o} semimartingale by Theorems \ref{thm:orderingdcxemm} and \ref{thm:orderingcxemm}.

\begin{example}[Comparison of $\RR$-valued L\'{e}vy processes and It\^{o} semimartingales]
\label{ex:exampleorderingemm}
Let $X$ be an $\RR$-valued L\'{e}vy process as in Example \ref{ex:exampleorderingp}, without assuming inequality \eqref{eq:levyprocessspecial}; we do not need that the semimartingales are special. Further, let $Y$ be an It\^{o} semimartingale:
\begin{align*}
Y_t = y_0 + \int_0^t \beta_s ds + \int_0^t \delta_s dB_s + \int_0^t \int_{|y| \le 1} y \tilde{\mu}^Y(ds,dy) +\int_0^t \int_{|y| \ge 1} y \mu^Y(ds,dy),
\end{align*}
where $y_0 \in \RR$, $\beta$ and $\delta$ are adapted processes such that the integral exist, $(B_t)_{t \in [0,T]}$ is a standard Brownian motion and $\tilde{\mu}$ is the compensated jump measure of $Y$. As before the compensator is of the form $\nu(dt,dx) = dt~ n_t(dx)$. We consider an integrable convex or directionally convex function $f$.\\
We assume the existence of e.m.m. for $X$ and $Y$. As we have seen in Example \ref{ex:exampleorderingp}, it is advantageous if $X$ is a L\'{e}vy process under $Q_1$. Therefore, we assume that $Q_1$ is structure preserving, i.e. $X$ remains a L\'{e}vy process under $Q_1$. By \citet[Theorem 4.21]{RS11} we see directly that the differential semimartingale characteristics of $X$ under $Q_1$ are given by $(0,c^2,h K)$, where $h: \RR \to \RR_+$ is a Borel measurable function such that 
\begin{align*}
\int_\RR (\sqrt{h(x)} - 1)^2 K(dx) < \infty.
\end{align*} 
This choice of $Q_1$ leads to the validity of Assumption~$(i)$. Assumption~$(ii)$ is achieved as in Example \ref{ex:exampleorderingp} since $Q_1 \sim Q_2$. Further conditions $(iii)$ and $(iv)$ are assumed to be in force.

The change of measure does not affect the integrator of a good version of the semimartingale characteristics. This follows directly form Girsanov's theorem for semimartingales, cf. \citet[Theorem III.3.24]{JS03}. Hence, the integrator remains the identity. Consequently, the differential characteristics of $Y$ alter to $(0,\delta^2,Z \eta)$, where $Z: \Omega \times \RR_+ \times \RR \to \RR$ is non-negative, $\mathscr{P} \otimes \mathscr{B}(\RR)$-measurable and fulfills the conditions of \citet[Theorem III.3.24]{JS03}. If now $dt \times Q_2$ almost surely the differential characteristics are ordered,
\begin{align*}
\delta^2_t \le&~ c^2,\\
\int_{\RR} g(t,Y_{t^-},x) Z_t(x) \eta_t(dx) \le&~ \int_{\RR} g(t,Y_{t^-},x) h(x) K(dx),
\end{align*}
for all $g(t,y,\cdot) \in \cF_{idcx}~ \text{or}~ \cF_{icx}$ such that the integrals exist, we obtain from Theorem \ref{thm:orderingdcxemm}, \ref{thm:orderingcxemm} that
\begin{align*}
E_{Q_2}[f(Y_T)] \le E_{Q_1}[f(X_T)].
\end{align*}
\end{example}

So far we only considered one-dimensional examples because we then obtain that the support is the whole space $\RR$. We now give an example in higher dimensions.

\begin{example}[Comparison of Markovian special It\^{o} semimartingales and special It\^{o} semimartingales]
\label{ex:exampleorderingitop}
Let $X$ be a $d$-dimensional Markovian special It\^{o} semimartigale. Then its differential charateristics with respect to the Lebesgue measure are deterministic functions of time and state
\begin{align*}
b^i_t =&~ b^i(t,X_{t^-}),\\
c^{ij}_t = &~ c^{ij}(t,X_{t^-}),\\
K_{\omega,t}(dx) = &~ K(t,X_{t^-}(\omega))(dx).
\end{align*}
We compare $X$ to an special It\^{o} semimartingale $Y$. Therefore, let $Y$ be as in Example \ref{ex:exampleorderingp}. Assume that $\supp\left (P^{Y_t}\right ) \subset \supp\left (P^{X_t}\right )$ for all $t \in [0,T]$. This case is considered in \citet{BR06,BR07a}. Let $f$ be an increasing convex or increasing directionally convex integrable function.

For the regularity of $G_f$ in time we assume that the transition operators form a strongly continuous evolution system, then the differentiability follows as in equation \eqref{eq:semigroupcontinuityGg}. For the regularity in the space variable, we assume that the transition probabilities are regular enough, see Section \ref{subsubsec:regularity}. Further, we assume that $X$ propagates increasing convexity or increasing directional convexity.

Recall that by the Markov property of $X$, $G_f(t,X_t)$ is a martingale. From the condition on the support of $Y$ we gain condition $(ii)$. Assumptions~$(iii)$ and $(iv)$ are imposed.

Now the $dt \times P$ almost sure ordering of the differential semimartingale characteristics for all $i,j \le d$,
\begin{align}
\begin{split}
\label{eq:exampleorderingItosemimartingalep}
\beta^i_t \le &~ b^i(t,X_{t^-}),\\
\delta^{ij2}_t \le &~ c^{ij}(t,X_{t^-}),\\
\int_{\RR^d} g(t,Y_{t^-},x) \nu^Y(dx) \le&~ \int_{\RR^d} g(t,Y_{t^-},x) K(t,X_{t^-})(dx),
\end{split}
\end{align} 
for all $g(t,y,\cdot) \in \cF_{idcx}~ \text{or}~ \cF_{icx}$ such that the integrals exist, yields by Theorem \ref{thm:orderingidcxp}, \ref{thm:orderingicxp} that 
\begin{align*}
E[f(Y_T)] \le E[f(X_T)]
\end{align*}
when $f$ is integrable and increasing directionally convex. If $f$ is integrable and increasing convex we need to exchange the second inequality in \eqref{eq:exampleorderingItosemimartingalep} to $\delta_t^2 \le_{psd} c(t,X_{t^-})$ to gain the same inequality.
\end{example}

Note that this example differs a bit from \citet{BR06,BR07a}. There the semimartingales are defined on different probability spaces. As consequence of this setting on the right-hand side of the Inequalities \eqref{eq:exampleorderingItosemimartingalep} the differential semimartingale characteristics of $X$ have to be evaluated at $Y_{t^-}$. This is not necessary in the framework here. However, we consider in the proofs $f(X_T)$ conditioned on $X_t = Y_t$ and hence in this setting we could interchange $X$ and $Y$. This then leads to inequalities as in \citet{BR06,BR07a}.

Next we give an application of Theorems \ref{thm:orderingidcxp}, \ref{thm:orderingicxp} to the case when the integrator $A$ in the good version of the semimartingale characteristics is not the identity.

\begin{example}[Comparison result for extended Grigelionis Processes] We assume the semimartingale $X$ to be an extended Grigelionis process. This sort of processes are used for example in \citet{Ka98}. A special semimartingale is called an extended Grigelionis process if there exists a discrete set $\Theta \subset \RR_+ \setminus \{0\}$ so that the increasing process of a good version of the semimartingale characteristics is given by 
\begin{align*}
A_t = t + \sum_{s \le t} \ind_\Theta(s).
\end{align*}
Intuitively this definition means that we have an It\^{o} semimartingale plus jumps at fixed times. The semimartingale characteristics $(B,C,\nu)$ of $X$ then have the form
\begin{align*}
B^i_t = &~ \int_0^t b^i_s ds + \sum_{s \in \Theta \cap [0,t]} b^i_s,\\
C^{ij}_t = &~ \int_0^t c^{ij}_s ds,\\
\nu([0,t]\times G) = &~ \int_0^t K_s(G) ds + \sum_{s \in \Theta \cap [0,t]} K_s(G)~~\text{for any } G \in \mathscr{B}(\RR^d),
\end{align*} 
where $(b_t)_{t \in [0,T]}$ is a predictable $\RR^d$-valued process, $(c_t)_{t \in [0,T]}$ is a predictable $\RR^{d \times d}$-valued process and $K$ is a transition kernel from $(\Omega \times \RR_+, \cP)$ into $(\RR^d,\mathscr{B}(\RR^d))$. Further, we assume that $X$ is a Markov process with transition probabilities that are regular enough as in Example \ref{ex:exampleorderingitop}.

We compare $X$ to another extended Grigelionis process $Y$ with characteristics $(\tilde{B},\tilde{C},\tilde{\nu})$ with respect to $\tilde{A}_t = t + \sum_{s \le t} \ind_{\tilde{\Theta}}(s)$. We assume that $\supp(P^{Y_t}) \subset \supp(P^{X_t})$ for all $t \in [0,T]$. To apply Theorems \ref{thm:orderingidcxp} and \ref{thm:orderingicxp}, we need to find a common integrator for a good version of the semimartingale characteristics. Choosing $A' = A + \tilde{A}$, cf. Remark \ref{rem:onthm:orderingdcxemm}, the differential characteristics of $X$ change to $\left (b \ind_{(\tilde{\Theta} \setminus \Theta)^c}, c, K \ind_{(\tilde{\Theta} \setminus \Theta)^c}\right )$, the differential characteristics of $Y$ change accordingly. Let $f$ be an increasing convex or increasing directionally convex integrable function.

The differentiability of $G_f$ follows as in Example \ref{ex:exampleorderingitop}. Further we assume the propagation of order. Hence, we have condition $(i)$. Condition $(ii)$ follows from the Markov property and the assumptions on the supports; conditions $(iii)$ and $(iv)$ are imposed.

Then an ordering of the semimartingale characteristics yields an ordering of expectations. For more details see \citet{Ko19}.
\end{example}

In Corollary \ref{cor:girsanovemm} we mentioned already the particular simplification if we consider a semimartingale under two different e.m.m. We apply this simplification in the case of a L\'{e}vy process.

\begin{example}[Comparison of e.m.m. for a one-dimensional L\'{e}vy process]
Let $X$ be a one-dimensional type C L\'{e}vy process and $Q_1$ and $Q_2$ e.m.m. of $X$. We assume that $X$ possesses a smooth Lebesgue-density under $Q_1$, for example by imposing inequality \eqref{eq:existencesmoothdensity}. Further, we assume that $Q_2$ is structure preserving (see Example \ref{ex:exampleorderingemm}). No further restrictions are put on $Q_1$. Let $f$ be an integrable function not necessarily convex or directional convex, see Remark \ref{rem:oncoremm}.

As seen in Example \ref{ex:exampleorderingemm} in this case condition $(i)$ and $(ii)$ of Corollary \ref{cor:girsanovemm} hold. Condition $(iii)$ and $(iv)$ are assumed. If we assume that the L\'{e}vy measures are $dt \times Q_1$ almost surely ordered,
\begin{align*}
\int_{\RR} H_{G_f}(t,Y_{t^-},x) K^1_t(dx) \le&~ \int_{\RR} H_{G_f}(t,Y_{t^-},x) K^2(dx),
\end{align*}
then we obtain from Corollary \ref{cor:girsanovemm}
\begin{align*}
E_{Q_1}[f(X_T)] \le E_{Q_2}[f(X_T)].
\end{align*}
\end{example}

\bibliographystyle{chicago}

\end{document}